\newtheorem{theo}{Theorem}[section]
\newtheorem{lemm}[theo]{Lemma}
\title{\bf Branching rule on winding subalgebras of affine Kac-Moody algebras}
\author{Duc-Khanh Nguyen}
\date{}
\renewcommand{\min}{\mathrm{min}}
\renewcommand{\max}{\mathrm{max}}
\newcommand{\h}{\mathrm{h}}
\newcommand{\Aut}{\mathrm{Aut}}
\newfont{\gothic}{eufb10}
\begin{document}
\maketitle

\setcounter{section}{-1}
 
\begin{abstract}
In this paper, by using the Lakshmibai-Seshadri paths, we give a branching rule for representations of affine Kac-Moody algebras to their winding subalgebras. As a corollary, we can describe branching multiplicities in the language of paths. An analog of Steinberg's formula for branching multiplicities is also given.
\end{abstract}

\textit{\\2020 Mathematics Subject Classification.} 17B67, 17B10, 22E65.\\ 
\textit{Key words and phrases.} Affine Kac-Moody algebras, winding subalgebras, Lakshmibai-Seshadri paths, generalized Kostant's partition functions.

\setcounter{tocdepth}{1}

\section{Introduction}
One of the most important problems in representation theory is understanding the decomposition of an irreducible module into the sum of irreducible modules of a subalgebra: What is the support of the decomposition and formula for the branching multiplicities? In the representation theory of Kac-Moody algebras, the answers to those questions were answered in some special cases: Let $\mathfrak{g}$ be a symmetrisable Kac-Moody algebra. In \cite{Peter}, Peter Littelmann gave the generalized Littlewood-Richardson rule for the case $\mathfrak{g} \subset \mathfrak{g} \otimes \mathfrak{g}$ and the branching rule for the case $\mathfrak{h} \subset \mathfrak{g}$ when $\mathfrak{h}$ is a Levi subalgebra of $\mathfrak{g}$ in the language of Lakshmibai-Seshadri paths. Robert Steinberg gave the combinatorial formula for tensor multiplicities of simple Lie algebras in \cite{Steinberg}. It is similar to the analog Kostant's multiplicity formula for symmetrisable Kac-Moody algebras (see \cite{Carter}). 

In this paper, we study the branching rule for winding subalgebras $\mathfrak{g}[u]$ of affine Kac-Moody algebras $\mathfrak{g}$. The couple was studied in \cite{KacWakimoto} because of its relation with the tensor product case and a solution to Frenkel's conjecture \cite{frenkel2006representations}. In \cite{KacWakimoto}, Kac and Wakimoto gave the formula of the branching function and its modular transformation for both cases $\mathfrak{g}\subset \mathfrak{g} \otimes \mathfrak{g}$ and $\mathfrak{g}[u]\subset \mathfrak{g}$. The branching function for tensor products can be expressed as the branching function for winding subalgebras for some special cases. It gives us an idea to understand the branching rule of an affine Kac-Moody algebra to its winding subalgebras using the method used for the tensor product case. Previously, the author studied some particular cases in \cite{nguyen2020branching}. The result of this work is more general compared to \cite{nguyen2020branching} and more combinatorial compared to \cite{KacWakimoto}. It can be stated as follows. Let $P_+$ be the set of dominant integral weights. For each $\lambda\in P_+$, let $L(\lambda)$ be the irreducible $\mathfrak{g}$-module of highest weight $\lambda$. Let $\dot{P_+} $ and $\dot{L}(\lambda')$ where $\lambda' \in \dot{P_+}$ be the corresponding objects of $\mathfrak{g}[u]$. For any $\lambda\in P_+$ and $\lambda'\in \dot{P}_+$, let $c_{\lambda}^{\lambda'}$ be the multiplicity of $\dot{L}(\lambda')$ in the decomposition  \begin{equation*}L(\lambda)=\bigoplus\limits_{\lambda'\in\dot{P}_+}c_{\lambda}^{\lambda'}\dot{L}(\lambda').
\end{equation*}

\noindent For each dominant integral weight $\lambda$ of $\mathfrak{g}$, let $\wp_{\lambda}$ be the set of all Lakshmibai-Seshadri paths of shape $\lambda$ defined in \cite{Peter}. The paths are piecewise linear with starting point $\pi(0)=0$ and end point $\pi(1)$. A path is called $\mathfrak{g}[u]$-dominant if its image is contained in the dominant Weyl chamber of the root system of $\mathfrak{g}[u]$. The branching rule for $\mathfrak{g}[u]\subset \mathfrak{g}$ can be stated as follows. 

\begin{theo}Let $\mathfrak{g}$ be an affine Kac-Moody Lie algebra and $\mathfrak{g}[u]$ be a winding subalgebra of $\mathfrak{g}$. Then for any $\lambda\in P_+$, we have \begin{equation*}
L(\lambda)=\bigoplus\limits_{\pi}\dot{L}(\pi(1)), 
\end{equation*}
where the sum runs over all $\mathfrak{g}[u]$-dominant paths in $\wp_{\lambda}$. In particular, for each $\lambda' \in \dot{P}_+$ the multiplicity $c_\lambda^{\lambda'}$ is equal to the number of $\mathfrak{g}[u]$-dominant paths in $\wp_\lambda$ with the end point $\lambda'$. 
\end{theo}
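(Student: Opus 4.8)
My plan is to realize the branching rule through Littelmann's path model, reducing the theorem to a statement about the $\mathfrak{g}[u]$-crystal structure carried by $\wp_\lambda$. Recall from \cite{Peter} that $\wp_\lambda$ models $L(\lambda)$: it is generated from the straight path $\pi_\lambda(t)=t\lambda$ by the lowering operators $f_{\alpha_0},\dots,f_{\alpha_\ell}$ of the simple roots of $\mathfrak{g}$, it is stable under all the $e_{\alpha_i},f_{\alpha_i}$, and $\mathrm{ch}\,L(\lambda)=\sum_{\pi\in\wp_\lambda}e^{\pi(1)}$, so in particular the weight multiplicity is $m_\lambda(\nu)=\#\{\pi\in\wp_\lambda:\pi(1)=\nu\}$. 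The first step is to compare the two root data: writing $\theta$ for the highest root and $\delta$ for the null root of $\mathfrak{g}$, the finite simple roots $\alpha_1,\dots,\alpha_\ell$ are common to $\mathfrak{g}$ and $\mathfrak{g}[u]$, whereas the affine simple root of $\mathfrak{g}[u]$ is $\dot\alpha_0=u\delta-\theta$, a positive real root of $\mathfrak{g}$ with coroot $\dot\alpha_0^\vee=uK-\theta^\vee$. Using the standard fact that, for a real root $\beta$, $e_\beta\pi=0$ iff $\langle\pi(t),\beta^\vee\rangle\ge 0$ for all $t$, a path is $\mathfrak{g}[u]$-dominant exactly when $\langle\pi(t),\alpha_i^\vee\rangle\ge 0$ for all $i$ and $\langle\pi(t),\dot\alpha_0^\vee\rangle\ge 0$ throughout; these are the candidate highest-weight elements for $\mathfrak{g}[u]$.

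With this in hand the argument follows the shape of Littelmann's branching rule for Levi subalgebras. I would equip $\wp_\lambda$ with the crystal structure of $\mathfrak{g}[u]$ by retaining $e_{\alpha_i},f_{\alpha_i}$ and adjoining the operators $e_{\dot\alpha_0},f_{\dot\alpha_0}$ attached to $\dot\alpha_0$, decompose $\wp_\lambda$ into connected $\mathfrak{g}[u]$-components, and identify the source of each component with a $\mathfrak{g}[u]$-dominant path by the dominance characterization above. Littelmann's isomorphism theorem, applied to the root system of $\mathfrak{g}[u]$, then identifies the component generated by a $\mathfrak{g}[u]$-dominant path $\pi$ with the crystal of $\dot{L}(\pi(1))$; summing the endpoints over all components gives $L(\lambda)=\bigoplus_\pi\dot{L}(\pi(1))$, and collecting the components whose source has endpoint a fixed $\lambda'$ gives the stated multiplicity $c_\lambda^{\lambda'}$.

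The one step that does not come for free — and the essential difference from the Levi setting — is the stability of $\wp_\lambda$ under $e_{\dot\alpha_0},f_{\dot\alpha_0}$. For the shared roots $\alpha_1,\dots,\alpha_\ell$ stability is inherited from the $\mathfrak{g}$-crystal structure, but $\dot\alpha_0=u\delta-\theta$ is not a simple root of $\mathfrak{g}$ when $u>1$, so Littelmann's stability theorem does not apply: reflecting a subsegment of an LS path by $s_{\dot\alpha_0}$ can a priori violate the integrality of the chains defining shape-$\lambda$ LS paths. Proving this key lemma is where the arithmetic of the winding embedding must enter. The leverage I would use is that $s_{\dot\alpha_0}=t_{u\theta^\vee}s_\theta$ lies in the affine Weyl group $W$ of $\mathfrak{g}$, so $\dot W\subseteq W$ and the action of $W$ on the crystal $\wp_\lambda$ (generated by the simple crystal reflections) restricts to a weight-equivariant action of $\dot W$ preserving $\wp_\lambda$; I would exploit this, together with the explicit translation part $t_{u\theta^\vee}$, to express the effect of $f_{\dot\alpha_0}$ through the $\mathfrak{g}$-operators it is conjugate to and thereby check that the chain conditions survive.

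I expect this stability lemma to be the main obstacle, precisely because it is the point at which a genuine real root of $\mathfrak{g}$ — rather than a subset of the simple roots, as for a Levi subalgebra — governs the branching, forcing one to understand how $\dot\alpha_0=u\delta-\theta$ mediates between the Weyl-chamber geometries of $\mathfrak{g}$ and $\mathfrak{g}[u]$ at the level of paths. Once it is established, the decomposition and the count of $\mathfrak{g}[u]$-dominant paths follow formally as above, and the Steinberg-type alternating-sum description of $c_\lambda^{\lambda'}$ is then read off by expanding each $m_\lambda(\nu)=\#\{\pi\in\wp_\lambda:\pi(1)=\nu\}$ via Kostant's formula and collecting signs over $\dot W$.
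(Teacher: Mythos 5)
Your route is genuinely different from the paper's. You propose the crystal-theoretic argument: adjoin the root operators attached to $\dot{\alpha_0}$ to make $\wp_\lambda$ a $\mathfrak{g}[u]$-crystal, decompose it into connected components, identify the sources with the $\mathfrak{g}[u]$-dominant paths via integrality, and invoke Littelmann's isomorphism theorem to read off each component as the crystal of $\dot{L}(\pi(1))$. The paper instead argues entirely at the level of characters: it substitutes the path character formula $ch_\lambda=\sum_{\pi\in\wp_\lambda}e_{\pi(1)}$ into the Weyl--Kac formula for $\mathfrak{g}[u]$, isolates the terms indexed by dominant weights, and cancels the contribution of every non-$\mathfrak{g}[u]$-dominant path by a sign-reversing pairing (Lemma \ref{cancel}): such a $\pi$ first crosses the level $h_\alpha=-1$ for some simple root $\alpha$ of $\mathfrak{g}[u]$, and the pair $\bar{\pi}=f_\alpha^{1+(\pi(1)|\alpha^\vee)}(\pi)$, $\bar{\sigma}=\sigma s_\alpha$ annihilates its term. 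The paper's route therefore needs much less of the crystal package --- only the integrality of $\min h_{\dot{\alpha_0}}$ and the stability of $\wp_\lambda\cup\{0\}$ under $f_{\dot{\alpha_0}}$, both imported from \cite{Peter} via Lemma \ref{Qmin} --- whereas yours additionally requires $e$-stability, the fact that every source is a dominant path, and the isomorphism theorem for the $\mathfrak{g}[u]$-operators. All of these are available in Littelmann's framework, so your architecture is sound, but it is heavier machinery for the same conclusion; what it buys is an actual $\mathfrak{g}[u]$-crystal decomposition of $\wp_\lambda$ rather than just a multiplicity count.

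The one step of yours I would flag is the proposed proof of the stability lemma. The obstacle is less severe than you suggest, since the stability and integrality statements you need are exactly the ones the paper quotes for root operators attached to positive real roots, not only simple ones; but the strategy you sketch --- expressing $f_{\dot{\alpha_0}}$ through the $\mathfrak{g}$-operators it is ``conjugate to'' via $s_{\dot{\alpha_0}}\in W$ --- is unlikely to work as stated. Root operators are not equivariant under the Weyl group action on paths; only the full string reflections $f_\alpha^{(\pi(1)|\alpha^\vee)}$ conjugate correctly, so a single application of $f_{\dot{\alpha_0}}$ cannot be rewritten as a conjugate of simple-root operators. (Also, minor point: $\dot{\alpha_0}=\frac{u-1}{a_0}\delta+\alpha_0$ equals $u\delta-\theta$ only when $a_0=1$.) To complete your route, cite or reprove the integrality and stability of $\wp_\lambda$ for arbitrary positive real roots directly, as the paper does; with that in hand, your component decomposition and the identification of $c_\lambda^{\lambda'}$ with the number of $\mathfrak{g}[u]$-dominant paths ending at $\lambda'$ go through.
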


\noindent Let $W$  be the Weyl group and $\rho$ be the sum of fundamental weights of $\mathfrak{g}$. Let $\dot{W},\dot{\rho}$ be the corresponding objects for $\mathfrak{g}[u]$. Let $\mathfrak{h}$ be a fixed Cartan subalgebra of $\mathfrak{g}$. For each root $\alpha \in \mathfrak{h}^*$, let $m_\alpha$ be its multiplicity. We define the generalized Kostant's partition function $\mathcal{P}:\mathfrak{h}^* \rightarrow \mathbb{N}$ as the number of ways of writing $\lambda$ as a sum of positive roots, each such root $\alpha$ being taken $m_\alpha$ times. We have a combinatorial expression for $c_\lambda^{\lambda'}$.

\begin{theo}Let $\mathfrak{g}$ be an affine Kac-Moody algebra and $\mathfrak{g}[u]$ be a winding subalgebra of $\mathfrak{g}$. Then for any $\lambda\in P_+$ and $\lambda' \in \dot{P}_+$, we have \begin{equation*} c_\lambda^{\lambda'}=\sum\limits_{\sigma\in W}\sum\limits_{\tau\in\dot{W}}\epsilon(\sigma\tau)\mathcal{P}(\sigma(\lambda+\rho)+\tau(\dot{\rho})-(\lambda'+\rho+\dot{\rho})).
\end{equation*} 
\end{theo}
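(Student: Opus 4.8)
The plan is to establish this Steinberg-type identity by combining two standard tools: the Weyl--Kac character formula together with the denominator identity for the winding subalgebra $\mathfrak{g}[u]$, which turns a branching multiplicity into an alternating sum of weight multiplicities, and Kostant's multiplicity formula for $\mathfrak{g}$ (valid for symmetrisable Kac--Moody algebras, see \cite{Carter}), which rewrites each weight multiplicity in terms of the partition function $\mathcal{P}$. The essential structural input I would use is that, because $1$ lies in the ring of Laurent polynomials defining the winding, the Cartan subalgebra $\mathfrak{h}$ of $\mathfrak{g}$ is also a Cartan subalgebra of $\mathfrak{g}[u]$. Hence $\dot{\rho}\in\mathfrak{h}^*$, $\dot{W}\subset W$, the sign character of $\dot{W}$ is the restriction of that of $W$ (both equal $\det$), and, crucially, the weight-space decomposition of $L(\lambda)$ is literally the same whether $L(\lambda)$ is viewed as a $\mathfrak{g}$- or a $\mathfrak{g}[u]$-module.

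First I would record the character identity $\operatorname{ch}L(\lambda)=\sum_{\lambda'\in\dot{P}_+}c_\lambda^{\lambda'}\operatorname{ch}\dot{L}(\lambda')$, which is legitimate because the two characters agree as formal sums over $\mathfrak{h}^*$. Multiplying through by the $\mathfrak{g}[u]$-Weyl denominator and using the denominator identity in the form $\prod_{\dot{\alpha}>0}(1-e^{-\dot{\alpha}})^{\dot{m}_{\dot{\alpha}}}=\sum_{\tau\in\dot{W}}\epsilon(\tau)e^{\tau(\dot{\rho})-\dot{\rho}}$, the right-hand side collapses to $\sum_{\lambda'}c_\lambda^{\lambda'}\sum_{\tau\in\dot{W}}\epsilon(\tau)e^{\tau(\lambda'+\dot{\rho})-\dot{\rho}}$. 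Extracting the coefficient of $e^{\lambda'}$ and using that $\lambda'+\dot{\rho}$ is regular dominant for $\dot{W}$ (so that only $\tau=1$ survives on the right), I obtain the intermediate formula
\begin{equation*}
c_\lambda^{\lambda'}=\sum_{\tau\in\dot{W}}\epsilon(\tau)\dim L(\lambda)_{\lambda'+\dot{\rho}-\tau(\dot{\rho})}.
\end{equation*}

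Next I would substitute Kostant's formula $\dim L(\lambda)_\mu=\sum_{\sigma\in W}\epsilon(\sigma)\mathcal{P}(\sigma(\lambda+\rho)-(\mu+\rho))$ with $\mu=\lambda'+\dot{\rho}-\tau(\dot{\rho})$. Since then $\mu+\rho=(\lambda'+\rho+\dot{\rho})-\tau(\dot{\rho})$, the argument of $\mathcal{P}$ becomes $\sigma(\lambda+\rho)+\tau(\dot{\rho})-(\lambda'+\rho+\dot{\rho})$, and collecting signs via $\epsilon(\sigma)\epsilon(\tau)=\epsilon(\sigma\tau)$ yields exactly the claimed double sum.

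The main obstacle is making these manipulations rigorous in the affine setting, where $W$, $\dot{W}$, and the characters are all infinite; the formula must be read in a suitable completion of the group algebra of $\mathfrak{h}^*$, and I must show that for fixed $\lambda,\lambda'$ only finitely many terms are nonzero. Here I would use the finiteness of order intervals in the affine root lattice: because the simple roots of $\mathfrak{g}$ are linearly independent in $\mathfrak{h}^*$, the set $\{x: a\preceq x\preceq b\}$ is finite for any $a,b$. Since $\tau(\dot{\rho})\preceq\dot{\rho}$ always, and $\lambda'+\dot{\rho}-\tau(\dot{\rho})$ must be a weight of $L(\lambda)$ (hence $\preceq\lambda$), the surviving $\tau$ satisfy $\dot{\rho}+\lambda'-\lambda\preceq\tau(\dot{\rho})\preceq\dot{\rho}$ and are finite in number; the same argument, applied to $\sigma(\lambda+\rho)$ lying in the interval $[\mu+\rho,\lambda+\rho]$, bounds the inner sum, and an analogous height bound shows $\mathcal{P}$ is finite at every argument. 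Assembling these finiteness statements, together with verifying the level-matching constraint forced by the winding (which is what makes the branching nonzero), is the part that requires the most care; the algebraic identity itself is then a formal consequence.
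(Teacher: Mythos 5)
Your proposal is correct and follows essentially the same route as the paper: clear the $\dot{W}$-Weyl denominator from $ch_\lambda=\sum_{\lambda'}c_\lambda^{\lambda'}\dot{ch}_{\lambda'}$, use regular dominance of $\lambda'+\dot{\rho}$ to isolate $c_\lambda^{\lambda'}$, and substitute Kostant's multiplicity formula (Proposition 19.20 of \cite{Carter}) for $m_\lambda(\mu)$; the only cosmetic difference is that you extract the coefficient of $e_{\lambda'}$ after passing through the intermediate Racah-type formula $c_\lambda^{\lambda'}=\sum_{\tau\in\dot{W}}\epsilon(\tau)m_\lambda(\lambda'+\dot{\rho}-\tau(\dot{\rho}))$, whereas the paper substitutes Kostant's formula first and then restricts to dominant $\eta$. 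Your added attention to finiteness of the sums and to the level constraint $\lambda'\in\dot{P}_+^{uk}$ is sound and slightly more careful than the paper's treatment.
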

The paper is organized as follows. In Section \ref{preliminaries}, we introduce the fundamental results of affine Kac-Moody algebras, winding subalgebras, and Lakshmibai-Seshadri paths. In Section \ref{result}, we prove our main theorems.
 
\section{Preliminaries}\label{preliminaries}
\subsection{Symmetrisable Kac-Moody algebras} In this subsection, we recall some fundamental notions and results for symmetrisable Kac-Moody algebras. Let $\mathfrak{g}$ be a symmetrisable Kac-Moody algebra. Let $W$ be the Weyl group. Let $P_+$ be the set of all dominant integral weights. Let $\rho$ be the sum of fundamental weights $\Lambda_i$. For each $\lambda \in P_+$, let $L(\lambda)$ be the irreducible highest weight $\mathfrak{g}$-module of highest weight $\lambda$. The character $ch_\lambda$ of $L(\lambda)$ is given by Kac.
\begin{theo}[Corollary 19.18, \cite{Carter}]\label{Kachar} Let $L(\lambda)$, $\lambda\in P_+$, be an irreducible module for a symmetrisable Kac-Moody algebra. Then \begin{equation*}
ch_\lambda=\frac{\sum\limits_{w\in W} \epsilon(w)e_{w(\lambda+\rho)}}{\sum\limits_{w\in W}\epsilon(w)e_{w(\rho)}}.
\end{equation*}
\end{theo}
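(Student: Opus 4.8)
The plan is to prove Theorem \ref{Kachar} by the classical Verma-module argument, passing to a completion of the group algebra, expanding $ch_\lambda$ in Verma characters, using the Casimir operator to cut this expansion down to a finite sum, and then exploiting the $W$-anti-invariance forced by the Weyl denominator. First I would work in a suitable completion $\hat{\mathcal{E}}$ of the group algebra of the weight lattice, the natural home for characters of modules in category $\mathcal{O}$. For a weight $\mu$ let $M(\mu)$ be the Verma module of highest weight $\mu$; by the Poincaré--Birkhoff--Witt theorem its character is
\[ ch\, M(\mu) = e_\mu \prod_{\alpha > 0}(1 - e_{-\alpha})^{-m_\alpha}, \]
the product running over positive roots counted with multiplicity $m_\alpha$. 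Since $\{ch\, M(\mu)\}$ differs from $\{e_\mu\}$ by a unitriangular (with respect to the root order) change of basis with integer entries, and each weight space of $L(\lambda)$ is finite dimensional, I can write
\[ ch_\lambda = \sum_{\mu \leq \lambda} c_{\lambda,\mu}\, ch\, M(\mu), \qquad c_{\lambda,\lambda} = 1, \quad c_{\lambda,\mu} \in \ZZ. \]

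Next I would bring in the generalized Casimir operator $\Omega$, which exists precisely because $\mathfrak{g}$ is symmetrisable and carries an invariant bilinear form. It acts on any highest weight module of highest weight $\mu$ by the scalar $(\mu+\rho,\mu+\rho) - (\rho,\rho)$. As $L(\lambda)$ is a quotient of $M(\lambda)$, comparing eigenvalues shows $c_{\lambda,\mu} = 0$ unless $(\mu+\rho,\mu+\rho) = (\lambda+\rho,\lambda+\rho)$; together with $\mu \leq \lambda$ this leaves only finitely many $\mu$, so the sum is finite. Setting $R = \prod_{\alpha>0}(1-e_{-\alpha})^{m_\alpha}$ and multiplying through by $e_\rho R$ gives
\[ e_\rho R \cdot ch_\lambda = \sum_\mu c_{\lambda,\mu}\, e_{\mu+\rho}. \]
Now $ch_\lambda$ is $W$-invariant because $L(\lambda)$ is integrable for $\lambda\in P_+$, while a direct computation on simple reflections (each $s_i$ permutes the positive roots other than $\alpha_i$, preserving multiplicities, and flips $\alpha_i$) shows $e_\rho R$ is $W$-anti-invariant. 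Hence the left-hand side is anti-invariant: writing $\nu = \mu+\rho$, the coefficient of $e_{w\nu}$ equals $\epsilon(w)$ times that of $e_\nu$, and the coefficient vanishes whenever $\nu$ lies on a wall (is fixed by a nontrivial element of $W$).

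Finally I would isolate the dominant contributions. By anti-invariance it suffices to determine $c_{\lambda,\mu}$ for $\nu = \mu+\rho$ dominant and regular (the wall terms being zero). For such $\mu$ the relation $(\nu,\nu) = (\lambda+\rho,\lambda+\rho)$ rewrites as $(\lambda-\mu,\ (\lambda+\rho)+\nu) = 0$; here $\lambda-\mu$ is a nonnegative sum of positive roots, and $(\lambda+\rho)+\nu$ pairs strictly positively with every simple root since $\lambda+\rho$ is strictly dominant, forcing $\lambda-\mu = 0$. Thus the unique dominant term is $\mu=\lambda$ with $c_{\lambda,\lambda}=1$, and anti-invariance propagates it to $e_\rho R \cdot ch_\lambda = \sum_{w\in W}\epsilon(w)\, e_{w(\lambda+\rho)}$. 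Specializing to $\lambda=0$, where $L(0)$ is trivial and $ch_0 = 1$, yields the denominator identity $e_\rho R = \sum_{w\in W}\epsilon(w)\, e_{w\rho}$; dividing the two displays gives
\[ ch_\lambda = \frac{\sum_{w\in W} \epsilon(w)\, e_{w(\lambda+\rho)}}{\sum_{w\in W}\epsilon(w)\, e_{w\rho}}. \]
The main obstacle is the bookkeeping in the infinite-dimensional setting: one must justify that the Verma expansion converges in the completion $\hat{\mathcal{E}}$, that the Casimir-eigenvalue constraint genuinely reduces the expansion to a finite set (the key finiteness lemma, where symmetrisability and positivity properties of the invariant form are indispensable), and that the infinite product $e_\rho R$, now involving imaginary roots with their multiplicities, is truly anti-invariant. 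In the finite-dimensional case these points are immediate; for Kac--Moody algebras they are the technical heart of the proof.
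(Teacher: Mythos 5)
The paper itself offers no proof of this statement---it is quoted from Carter (Corollary 19.18) as a known result---so your proposal can only be measured against the standard argument in the literature, which is indeed the route you chose: Verma expansion, Casimir constraint, anti-invariance of $e_\rho R$. Within that route, however, one step is genuinely false rather than merely under-justified: the claim that the Casimir condition $(\mu+\rho\,|\,\mu+\rho)=(\lambda+\rho\,|\,\lambda+\rho)$ together with $\mu\leq\lambda$ ``leaves only finitely many $\mu$, so the sum is finite.'' Finiteness here is a feature of the finite-dimensional case, where the invariant form is positive definite and a sphere meets the lattice cone $\lambda-\sum_i\mathbb{N}\alpha_i$ in finitely many points. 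For a symmetrisable Kac--Moody algebra of infinite type the form is indefinite, and the claim \emph{must} fail: the entire orbit $\{w(\lambda+\rho)-\rho : w\in W\}$ satisfies both constraints (each $w(\lambda+\rho)\leq\lambda+\rho$ and the norm is $W$-invariant), and $W$ is infinite in the affine case that this paper is about. Indeed your own conclusion $e_\rho R\cdot ch_\lambda=\sum_{w\in W}\epsilon(w)e_{w(\lambda+\rho)}$ is an infinite sum, contradicting the asserted finiteness. So the ``key finiteness lemma'' you defer to in your closing paragraph is not a gap to be filled later; it is a false statement, and the proof must be arranged so as never to invoke it: the Verma expansion is kept as a formal, possibly infinite, sum supported in $\lambda-\sum_i\mathbb{N}\alpha_i$, which is perfectly legitimate in the completion, and one multiplies by $e_\rho R$ term by term using $e_\rho R\cdot ch\,M(\mu)=e_{\mu+\rho}$.

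Once finiteness is dropped, a second step needs repair: you say ``by anti-invariance it suffices to determine $c_{\lambda,\mu}$ for $\nu=\mu+\rho$ dominant and regular.'' For an infinite Weyl group this is not automatic, because a $W$-orbit need not meet the dominant chamber at all (for an affine algebra, orbits of weights of negative level never do). What rescues the argument is the support bound, not anti-invariance alone: every weight appearing in the sum satisfies $\mu\leq\lambda$; by anti-invariance the full orbit of any appearing $\nu$ consists of appearing weights, hence is bounded above by $\lambda+\rho$; a subset of $\lambda+\rho-\sum_i\mathbb{N}\alpha_i$ has maximal elements (ascending chains below $\lambda+\rho$ terminate); and a maximal appearing $\nu$ must be dominant (otherwise some $s_i\nu>\nu$ also appears with nonzero coefficient) and regular (otherwise anti-invariance forces its coefficient to vanish). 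This maximality argument is the actual engine of the classical proof---it is what replaces both your finiteness claim and the finite-dimensional fact that every orbit meets the dominant chamber. With it inserted, your norm computation forcing $\mu=\lambda$ for dominant regular $\nu$, and the specialization $\lambda=0$ giving the denominator identity, are correct and complete the proof.
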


\noindent Let $\mathfrak{h}$ be Cartan subalgebra of $\mathfrak{g}$. Let $\Phi$ be the root system of $\mathfrak{g}$ and $\Phi_+$ be the set of all positive roots. For each $\alpha\in\Phi$, we denote its multiplicity by $m_\alpha$. We define the generalized Kostant's partition function $\mathcal{P}:\mathfrak{h}^* \rightarrow \mathbb{N}$ by \begin{equation*}\mathcal{P}(\zeta)= \# \left\lbrace(r_{\alpha,i})_{\alpha\in \Phi_+, 1\leq i\leq m_\alpha} \mid r_{\alpha,i} \in \mathbb{N} \text{ and } \zeta =\sum\limits_{\alpha\in\Phi_+}\sum\limits_{i=1}^{m_\alpha} r_{\alpha,i}\alpha \right\rbrace
\end{equation*} for each $\zeta \in \mathfrak{h}^*$. The multiplicity $m_\lambda(\mu)$ of each weight $\mu$ of $L(\lambda)$ is then given by a combinatorial formula.
\begin{theo}[Proposition 19.20, \cite{Carter}]\label{multpart} Let $\mathfrak{g}$ be a symmetrisable Kac-Moody algebra and $\lambda\in P_+$. Then for each weight $\mu$ of $L(\lambda)$ we have \begin{equation*}
m_\lambda(\mu) = \sum\limits_{w\in W}\epsilon(w)\mathcal{P}(w(\lambda+\rho)-(\mu+\rho)).
\end{equation*}
\end{theo}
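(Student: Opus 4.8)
The plan is to derive the formula directly from the Weyl--Kac character formula (Theorem \ref{Kachar}) by clearing denominators and extracting coefficients. First I would invoke the denominator identity for a symmetrisable Kac--Moody algebra, which asserts that the denominator appearing in Theorem \ref{Kachar} factors as
\[
\sum_{w\in W}\epsilon(w)e_{w(\rho)} = e_{\rho}\prod_{\alpha\in\Phi_+}\bigl(1-e_{-\alpha}\bigr)^{m_\alpha}.
\]
The second ingredient is that the generalized Kostant partition function is exactly the generating series for the reciprocal of this product: in the appropriately completed group algebra one has
\[
\prod_{\alpha\in\Phi_+}\bigl(1-e_{-\alpha}\bigr)^{-m_\alpha} = \sum_{\zeta}\mathcal{P}(\zeta)\,e_{-\zeta},
\]
which follows by expanding each factor $(1-e_{-\alpha})^{-m_\alpha}$ as a geometric-type series and collecting the resulting monomials according to the very definition of $\mathcal{P}$.

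Next I would write $ch_\lambda=\sum_\mu m_\lambda(\mu)e_\mu$ by definition of the character, substitute the factored denominator into Theorem \ref{Kachar}, and rearrange to isolate the product on the right. This yields
\[
e_\rho\,ch_\lambda = \Bigl(\sum_{w\in W}\epsilon(w)e_{w(\lambda+\rho)}\Bigr)\prod_{\alpha\in\Phi_+}\bigl(1-e_{-\alpha}\bigr)^{-m_\alpha}.
\]
Inserting the generating-series identity for $\mathcal{P}$ on the right and the weight expansion of $ch_\lambda$ on the left gives
\[
\sum_\mu m_\lambda(\mu)\,e_{\mu+\rho} = \sum_{w\in W}\sum_{\zeta}\epsilon(w)\,\mathcal{P}(\zeta)\,e_{w(\lambda+\rho)-\zeta}.
\]
Comparing the coefficients of $e_{\mu+\rho}$ on both sides, which forces $\zeta = w(\lambda+\rho)-(\mu+\rho)$, produces precisely
\[
m_\lambda(\mu) = \sum_{w\in W}\epsilon(w)\,\mathcal{P}\bigl(w(\lambda+\rho)-(\mu+\rho)\bigr),
\]
as claimed.

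The main technical obstacle is that all of these sums and products are infinite in the Kac--Moody setting, so the manipulations are not literally valid in the group algebra $\mathbb{Z}[P]$. I would address this by carrying out the entire computation in a suitable completion, namely the algebra of formal sums supported on a finite union of cones of the form $\nu-\sum_i\mathbb{N}\alpha_i$, in which both the denominator identity and the expansion of $\prod_{\alpha\in\Phi_+}(1-e_{-\alpha})^{-m_\alpha}$ are well defined and in which the extraction of the coefficient of any fixed $e_{\mu+\rho}$ involves only finitely many contributing terms. Granting this formal framework, the argument reduces to a short coefficient comparison; the substantive inputs are the character formula of Theorem \ref{Kachar} and the denominator identity, both standard for symmetrisable Kac--Moody algebras.
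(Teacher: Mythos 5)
The paper does not prove this statement: it is quoted verbatim from Carter (Proposition 19.20), so there is no internal proof to compare against. Your argument—denominator identity, the generating-function identity $\prod_{\alpha\in\Phi_+}(1-e_{-\alpha})^{-m_\alpha}=\sum_\zeta\mathcal{P}(\zeta)e_{-\zeta}$, and coefficient extraction in a completed group algebra—is correct and is essentially the standard proof given in that reference.
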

\subsection{Affine Kac-Moody algebras and its winding subalgebras} In this subsection, we consider the case $\mathfrak{g}$ is an affine Kac-Moody algebra. Let $I:=\{0,\dots,l\}$ and $A:=(a_{ij})_{i,j\in I}$ to be the generalized Cartan matrix defining $\mathfrak{g}$. Let $a:=\, \,^t(a_0,\dots,a_l)$ and $c:=(c_0,\dots, c_l)$ be the null vectors of relative prime integers such that $a_i,c_i\geq 0 $ and $Aa=cA=0$. The Coxeter number and dual Coxeter number of $\mathfrak{g}$ are $\h:=\sum\limits_{i\in I}a_i$ and $\h^{\vee}:=\sum\limits_{i\in I}c_i$, respectively. Let $(\mathfrak{h},\Pi,\Pi^{\vee})$ with $\Pi:=\{\alpha_0,\dots,\alpha_l\}, \Pi^\vee :=\{h_0,\dots,h_l\}$ be a realisation of $\mathfrak{g}$. Let $K :=\sum\limits_{i\in I} c_ih_i$ be the canonical central element and $\delta :=\sum\limits_{i\in I} a_i\alpha_i$ be the basis imaginary root of $\mathfrak{g}$. \\

The winding subalgebras of $\mathfrak{g}$ is defined in \cite{KacWakimoto, Wakimoto} as follows: Fix $u\in \mathbb{Z}_{>0}$ relative prime to $a_0$. Let $\Pi_u:=\{\dot{\alpha_0},\dots,\dot{\alpha_l}\}$ and $\Pi_u^\vee:=\{\dot{h_0},\dots,\dot{h_l}\}$, where \begin{equation*}\dot{\alpha_0}:=\frac{u-1}{a_0}\delta+\alpha_0, \dot{\alpha_i}:=\alpha_i \text{ for } i>0,\end{equation*} \begin{equation*} \dot{h_0}:=\frac{u-1}{c_0}K+h_0, \dot{h_i}:=h_i \text{ for } i>0.\end{equation*} The winding subalgebra $\mathfrak{g}[u]$ of $\mathfrak{g}$ which is defined by realization $(\mathfrak{h},\Pi_u,\Pi_u^{\vee})$. Since the matrix $(\dot{\alpha_i}(\dot{h_j}))_{i,j\in I}$ is exactly $A$, the subalgebra $\mathfrak{g}[u]$ is isomorphic to the algebra $\mathfrak{g}$. By indicating objects associated to $\mathfrak{g}[u]$ with an overdot, we have: 
\begin{itemize}
\item[] $\dot{K}=\sum_Ic_i\dot{h_i}=uK, \dot{\delta}=\sum_Ia_i\dot{\alpha_i}=u\delta,$ 
\item[] $\dot{\h}=\h,\dot{\h}^\vee =\h^\vee,$
\item[] $\dot{\Lambda_i}=\Lambda_i+(\frac{1}{u}-1)\frac{c_i}{c_0}\Lambda_0,$ 
\item[] $\dot{\rho}=\sum_I \dot{\Lambda_i}=\rho + (\frac{1}{u}-1)\h^\vee\Lambda_0$,
\item[] $\dot{W}=\langle \dot{s_0}, \dots,\dot{s_l} \rangle$, where $\dot{s_i}\in \Aut(\mathfrak{h}^*), \dot{s_i}(\lambda)=\lambda-\lambda(\dot{h_i})\dot{\alpha_i},$
\item[] $\dot{ch}_\lambda=\frac{\sum\limits_{w\in \dot{W}} \epsilon(w)e_{w(\lambda+\dot{\rho})}}{\sum\limits_{w\in \dot{W}}\epsilon(w)e_{w(\dot{\rho})}}$ for any $\lambda \in\dot{P}_+$.
\end{itemize}

\subsection{Lakshmibai-Seshadri paths and character formula}   
In this section, we recall the fundamental theory of Lakshmibai-Seshadri paths and the character formula in the language of paths for any symmetrisable Kac-Moody algebra in \cite{Peter}. Let $(.|.)$ be the standard bilinear symmetric invariant form of $\mathfrak{g}$. For each real root $\alpha$, we denote $\alpha^\vee :=\frac{2 \alpha}{(\alpha |\alpha)}$ its real coroot. Let $l(.)$ be the length function on $W$. Let $s_\beta$ be the reflection corresponding to a positive real root $\beta$.  
\subsubsection{Path operators}

Let $P_{\mathbb{R}} :=P\otimes_{\mathbb{Z}}\mathbb{R}$. Denote $\wp$ the set of all piecewise linear paths $\pi: [0,1]\rightarrow P_{\mathbb{R}} $ such that $\pi(0)=0$ with the equivalent relation $\pi \sim \pi'$ if $\pi=\pi'$ up to a reparametrization. For $\pi_1,\pi_2 \in \wp$, we define $\pi_1 * \pi_2 $ to be a path still in $\wp$ by \begin{equation*}
(\pi_1 *\pi_2)(t):= \begin{cases} \pi_1(2t) &\text{ if } t\in [0,\frac{1}{2}]; \\
\pi_1(1)+\pi_2(2t-1) &\text{ if }t\in[\frac{1}{2},1]. \end{cases}
\end{equation*} For each simple root $\alpha$ and a path $\pi$ in $\wp$, we define the path $s_\alpha(\pi)$ still in $\wp$ by \begin{equation*}s_\alpha(\pi)(t):=s_\alpha(\pi(t)).
\end{equation*}  
We define operators $e_\alpha$ and $f_\alpha$ on $\wp \cup \{0\}$ as follows.
\begin{itemize}

\item[1.] Let $h_\alpha: [0,1]\rightarrow \mathbb{R}$ be a function defined by $h_\alpha(t):= (\pi(t)|\alpha^\vee)$. Set \begin{equation*}Q:=\min_{t\in[0,1]}\{h_\alpha(t)\cap \mathbb{Z}\} \end{equation*} and \begin{equation*}q:=\min\{t \in [0,1]\text{ such that }h_\alpha(t)=Q
\},\end{equation*} \begin{equation*}p:=\max \{t\in [0,1] \text{ such that }h_\alpha(t)=Q\}.
\end{equation*}
Note that $Q \leq 0$ since $\pi(0)=0$ and $0\leq [ h_\alpha(1)-Q]$ where $[.]$ means the integral part of a real number \begin{equation*}[r]=\begin{cases} \lfloor x \rfloor &\text{ if }x \geq 0; \\ 
\lceil x \rceil &\text{ if } x \leq 0.
 \end{cases}
 \end{equation*}

\item[2.]If $Q<0$, let $y$ be the unique value of $t$ in $[0,q)$ such that $h_\alpha(y)=Q+1$ and $Q<h_\alpha(t)<Q+1$ for all $t\in (y,q)$. In this case, let $\pi_1, \pi_2,\pi_3$ be three paths in $\wp$ defined by 
\begin{equation*}\pi_1(t):=\pi(ty); \pi_2(t):=\pi(y+t(q-y))-\pi(y);\pi_3(t):=\pi(q+t(1-q))-\pi(q).
\end{equation*} 
In the other words, $\pi_1,\pi_2,\pi_3$ are obtained from restriction of the path $\pi$ on intervals $[0,y],[y,q],[q,1]$, then take translations of points $\pi(0),\pi(y),\pi(q)$ to $0$. We define the operator $e_\alpha$ on $\wp \cup \{0\}$ by \begin{equation*}e_\alpha(\pi):=\begin{cases} 0 &\text{ if }Q=0;\\
\pi_1 * s_\alpha(\pi_2) * \pi_3 &\text{ if } Q< 0.
\end{cases}
\end{equation*}

\item[3.]If $[ h_\alpha(1)-Q]>0 $, let $x$ be the unique value of $t$ in $(p,1]$ such that $h_\alpha(x)=Q+1$ and $Q<h_\alpha(t)<Q+1$ for all $t\in (p,x)$. In this case, let $\hat{\pi}_1, \hat{\pi}_2,\hat{\pi}_3$ be three paths in $\wp$ defined by \begin{equation*}\hat{\pi}_1(t):=\pi(tp); \hat{\pi}_2(t):= \pi(p+t(x-p))-\pi(p);\hat{\pi}_3(t):=\pi(x+t(1-x))-\pi(x).
\end{equation*} In the other words, $\hat{\pi}_1,\hat{\pi}_2,\hat{\pi}_3$ are obtained from restriction of the path $\pi$ on intervals $[0,p],[p,x],[x,1]$, then take translations of points $\pi(0),\pi(p),\pi(x)$ to $0$. We define the operator $f_\alpha$ on $\wp \cup \{0\}$ by \begin{equation*}
f_\alpha(\pi):=\begin{cases} 0 &\text{ if }[ h_\alpha(1)-Q ]=0 ;\\
\hat{\pi}_1 * s_\alpha(\hat{\pi}_2) * \hat{\pi}_3 &\text{ if } [ h_\alpha(1)-Q ]>0 .
\end{cases}
\end{equation*}
\end{itemize}
\begin{figure}[htbp]
  \centering
\includegraphics[width=10cm]{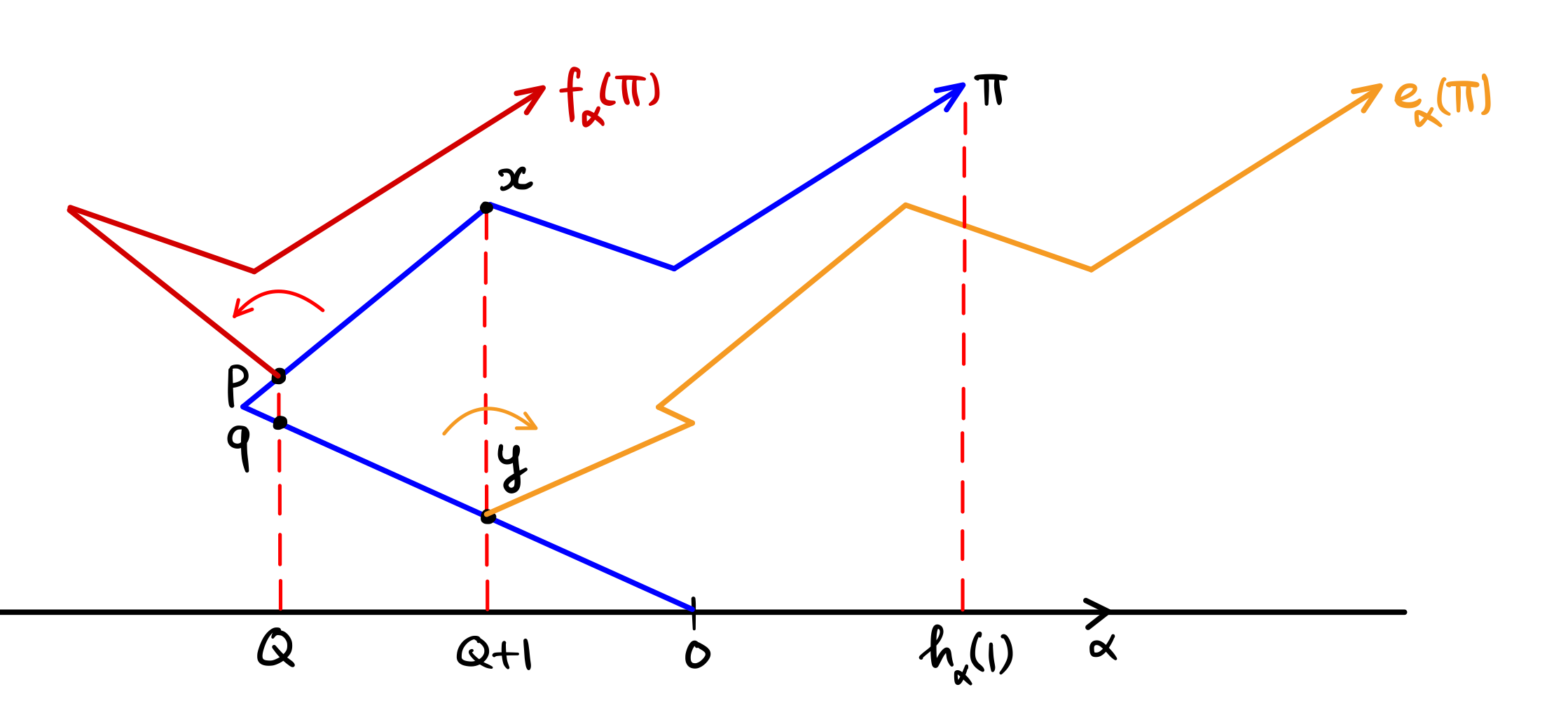}
\end{figure}
\begin{lemm}[Lemma 1.4, \cite{Peter}]\label{duoi}
\hfill\\
If $e_\alpha(\pi) \ne 0 $ then $e_\alpha(\pi)(1)=\pi(1)+\alpha$. If $f_\alpha(\pi) \ne 0$ then $f_\alpha(\pi)(1)=\pi(1)-\alpha$. 
\end{lemm}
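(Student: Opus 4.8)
The plan is to compute the endpoint $e_\alpha(\pi)(1)$ directly from the definition, exploiting two elementary facts: the endpoint of a concatenation is additive, so that $(\pi_1 * \pi_2 * \pi_3)(1) = \pi_1(1)+\pi_2(1)+\pi_3(1)$ (this follows by evaluating the defining formula for $*$ at $t=1$ and associating), and the reflection $s_\alpha$ acts linearly via $s_\alpha(v)=v-(v|\alpha^\vee)\alpha$. With these in hand the statement reduces to a short computation, and the only substantive input will be locating where the hypotheses $Q<0$ (resp. $[h_\alpha(1)-Q]>0$) enter.

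First I would record the endpoints of the three pieces in the case $e_\alpha(\pi)\neq 0$, i.e. $Q<0$. From $\pi_1(t)=\pi(ty)$, $\pi_2(t)=\pi(y+t(q-y))-\pi(y)$, and $\pi_3(t)=\pi(q+t(1-q))-\pi(q)$ one reads off $\pi_1(1)=\pi(y)$, $\pi_2(1)=\pi(q)-\pi(y)$, and $\pi_3(1)=\pi(1)-\pi(q)$. Since only the middle piece is reflected in $e_\alpha(\pi)=\pi_1 * s_\alpha(\pi_2) * \pi_3$, additivity gives
\begin{equation*}
e_\alpha(\pi)(1)=\pi(y)+s_\alpha\bigl(\pi(q)-\pi(y)\bigr)+\bigl(\pi(1)-\pi(q)\bigr).
\end{equation*}
The crux is to evaluate the single reflected term. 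By linearity, $s_\alpha(\pi(q)-\pi(y))=(\pi(q)-\pi(y))-\bigl(\pi(q)-\pi(y)\mid\alpha^\vee\bigr)\alpha$, and by the definition of $h_\alpha$ the coefficient is $h_\alpha(q)-h_\alpha(y)$. This is the one place the setup is used: the point $q$ satisfies $h_\alpha(q)=Q$, while $y$ was chosen so that $h_\alpha(y)=Q+1$; hence the coefficient equals $Q-(Q+1)=-1$ and $s_\alpha(\pi(q)-\pi(y))=(\pi(q)-\pi(y))+\alpha$. Substituting back, the telescoping sum $\pi(y)+(\pi(q)-\pi(y))+(\pi(1)-\pi(q))$ collapses to $\pi(1)$, leaving $e_\alpha(\pi)(1)=\pi(1)+\alpha$.

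For $f_\alpha$ the argument is completely parallel. Using $\hat{\pi}_1(1)=\pi(p)$, $\hat{\pi}_2(1)=\pi(x)-\pi(p)$, and $\hat{\pi}_3(1)=\pi(1)-\pi(x)$, together with $h_\alpha(p)=Q$ and $h_\alpha(x)=Q+1$, the reflected middle displacement now satisfies $(\pi(x)-\pi(p)\mid\alpha^\vee)=+1$, so $s_\alpha(\hat{\pi}_2(1))=(\pi(x)-\pi(p))-\alpha$, and the same telescoping yields $f_\alpha(\pi)(1)=\pi(1)-\alpha$. I do not expect any genuine obstacle here: the proof is a direct calculation, and the only conceptual point worth flagging is that the segment which gets reflected always runs between two points at which $h_\alpha$ takes the consecutive integer values $Q$ and $Q+1$, which is precisely what forces the relevant pairing with $\alpha^\vee$ to equal $\mp 1$ and hence shifts the endpoint by exactly $\pm\alpha$.
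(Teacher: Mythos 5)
Your computation is correct, and it is essentially the standard argument: the paper itself does not prove this lemma but cites it as Lemma 1.4 of Littelmann, whose proof is the same telescoping calculation using that the reflected segment joins points where $h_\alpha$ takes the consecutive values $Q$ and $Q+1$, so its displacement pairs with $\alpha^\vee$ to give $\mp 1$. No gaps.
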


\begin{lemm}[Proposition 1.5, \cite{Peter}]\label{maxminef}\hfill\\
$e_\alpha^n(\pi)=0$ if and only if $n>-Q$ and $f_\alpha^n(\pi)=0$ if and only if $n>[h_\alpha(1)-Q]$.
\end{lemm}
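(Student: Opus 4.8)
The plan is to track how the two nonnegative integers $-Q$ and $[h_\alpha(1)-Q]$ change under a single application of $e_\alpha$ and of $f_\alpha$, and then to conclude by induction on these quantities. For a path $\sigma\in\wp$ with $\sigma(0)=0$ I write $H_\sigma(t):=(\sigma(t)\mid\alpha^\vee)$ for its height function and $Q(\sigma)$ for the associated minimal integer value, so that $H_\pi=h_\alpha$ and $Q(\pi)=Q$. I will use repeatedly that $s_\alpha$ acts on weights by $s_\alpha(\mu)=\mu-(\mu\mid\alpha^\vee)\alpha$ and that $(\alpha\mid\alpha^\vee)=2$; in particular reflecting a based path negates its height, $H_{s_\alpha(\sigma)}=-H_\sigma$.

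First I would establish the transformation rule for $e_\alpha$: if $Q<0$, so that $\pi':=e_\alpha(\pi)\ne0$, then $Q(\pi')=Q+1$ and $H_{\pi'}(1)=h_\alpha(1)+2$. The endpoint value is immediate from Lemma \ref{duoi}, since $H_{\pi'}(1)=(\pi(1)+\alpha\mid\alpha^\vee)=h_\alpha(1)+2$. For the minimal integer value I would read off $H_{\pi'}$ segment by segment along $\pi'=\pi_1*s_\alpha(\pi_2)*\pi_3$. On the $\pi_1$-piece $H_{\pi'}=h_\alpha$ runs from $0$ to $h_\alpha(y)=Q+1$; because $q=\min\{t:h_\alpha(t)=Q\}$ and $Q$ is the smallest integer in the image of $h_\alpha$, the intermediate value theorem forces $h_\alpha>Q$ on $[0,q)\supseteq[0,y]$, so the least integer attained here is exactly $Q+1$. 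On the reflected middle piece $H_{\pi'}$ stays within $[Q+1,Q+2]$, and on the $\pi_3$-piece one computes $H_{\pi'}=(Q+2)+(h_\alpha-Q)=h_\alpha+2$; since $h_\alpha>Q-1$ on all of $[0,1]$ (again minimality of $Q$ plus the intermediate value theorem), here $H_{\pi'}>Q+1$, so no integer below $Q+2$ occurs. Comparing the three pieces gives $Q(\pi')=Q+1$.

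The rule for $f_\alpha$ is parallel and is the step I expect to demand the most care. Assuming $[h_\alpha(1)-Q]>0$, so $\pi'':=f_\alpha(\pi)\ne0$, I would show $Q(\pi'')=Q-1$ and $H_{\pi''}(1)=h_\alpha(1)-2$, whence $[H_{\pi''}(1)-Q(\pi'')]=[h_\alpha(1)-Q]-1$. Along $\pi''=\hat\pi_1*s_\alpha(\hat\pi_2)*\hat\pi_3$ the first piece gives $H_{\pi''}=h_\alpha$ decreasing to $h_\alpha(p)=Q$ with least integer $Q$, the reflected middle piece stays within $[Q-1,Q]$, and on the last piece $H_{\pi''}=h_\alpha-2$. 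The delicate point is this final segment: since $p=\max\{t:h_\alpha(t)=Q\}$, the path never again takes the value $Q$ after $p$, so $h_\alpha>Q$ on $(p,1]\supseteq[x,1]$ and thus $H_{\pi''}>Q-2$ there, leaving no integer below $Q-1$. This use of the \emph{maximality} in the definition of $p$ is exactly what keeps the new minimum from overshooting $Q-1$. Combining the pieces yields $Q(\pi'')=Q-1$.

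Finally I would assemble the lemma by induction. For $e_\alpha$ put $k:=-Q\ge0$. If $k=0$ then $Q=0$ and $e_\alpha(\pi)=0$ by definition, so $e_\alpha^{\,n}(\pi)=0$ for every $n>0=k$ while $e_\alpha^{\,0}(\pi)=\pi\ne0$; if $k>0$ then $e_\alpha(\pi)\ne0$ has invariant $k-1$, and applying the inductive hypothesis to $e_\alpha(\pi)$ (with $m=n-1$) gives $e_\alpha^{\,n}(\pi)=0$ if and only if $n>k=-Q$. The argument for $f_\alpha$ is identical with $k:=[h_\alpha(1)-Q]$, using that $f_\alpha(\pi)=0$ precisely when $k=0$ and that $k$ drops by exactly $1$ under $f_\alpha$ otherwise. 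This proves both equivalences.
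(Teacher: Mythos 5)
Your proof is correct. Note that the paper does not prove this statement at all --- it is imported verbatim as Proposition~1.5 of Littelmann's paper \cite{Peter} --- and your argument (computing that $e_\alpha$ raises $Q$ by $1$ and the endpoint height by $2$, that $f_\alpha$ does the reverse, and inducting on $-Q$ and $[h_\alpha(1)-Q]$) is essentially the standard one from that source; the only spot worth one more line is the claim $h_\alpha>Q$ on $(p,1]$, which needs the intermediate value theorem on $[x,1]$ (using $h_\alpha(x)=Q+1$) together with the maximality of $p$, exactly parallel to the IVT step you spelled out for $e_\alpha$.
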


\subsubsection{Lakshmibai-Seshadri paths and character formula}
For each $\lambda\in P_+$, let $W_\lambda$ the stabilizer of $\lambda$ and $\leq$ the Bruhat order on $W/W_\lambda$. We denote $(\underline{\tau},\underline{a})$ the pair of a sequence  in $W/W_\lambda$ $$\underline{\tau}:\tau_1 > \tau_2 > \dots > \tau_r$$  and a sequence in $\mathbb{Q}$ $$\underline{a}: 0=a_0<a_1<\dots <a_r=1.$$ We associate this pair with a piecewise linear path $\pi^\lambda_{(\underline{\tau},\underline{a})}:[0,1]\rightarrow P_{\mathbb{R}}$ defined by \begin{equation*}
\pi^\lambda _{(\underline{\tau},\underline{a})}(t) :=\sum\limits_{i=1}^{j-1}(a_i-a_{i-1})\tau_i(\lambda)+(t-a_{j-1})\tau_j(\lambda)\text{ for } a_{j-1}\leq t \leq a_j.
\end{equation*} We call such paths like this a rational $W$-path of shape $\lambda$. For each pair $(\tau, \sigma)$ in $W/W_\lambda$ with $\tau >\sigma$ and a rational number $a$ in $(0,1)$. We say an $a$-chain of $(\tau,\sigma)$ a sequence in $W/W_\lambda$ $$\kappa_0:= \tau >\kappa_1:= s_{\beta_1}\tau > \kappa_2 := s_{\beta_2}s_{\beta_1}\tau >\dots >\kappa_s:= s_{\beta_s}...s_{\beta_1}\tau  =\sigma, $$ where $\beta_1,\dots,\beta_s$ are positive real roots such that $a (\kappa_i(\lambda)|\beta_i^\vee) \in \mathbb{Z}$ and $l(\kappa_i)-1 =l(\kappa_{i-1})$.
\noindent For each $\lambda\in P_+$, A Lakshmibai-Seshadri path of shape $\lambda $ is defined to be an $W$-path of shape $\lambda$ $\pi^\lambda_{(\underline{\tau},\underline{a})}$ such that there exists an $a_i$-chain for the pair $(\tau_i,\tau_{i+1})$ for all $i =1,...,r-1$. The set of all Lakshmibai-Seshadri paths of shape $\lambda$ is denoted by $\wp_\lambda$.  
\begin{lemm}[Lemma 3.5 and Proposition 4.2, \cite{Peter}]\label{Qmin} We have $\wp_\lambda \subset \wp$ and the set of $\{0\}\cup \wp_\lambda$ is stable under the action of operators $e_\alpha$ and $f_\alpha$. Moreover, for each $\pi \in \wp_\lambda$, we have $\min_{t\in [0,1]}\{h_\alpha(t)\}$ is $Q$.
\end{lemm}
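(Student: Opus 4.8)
The plan is to prove the three assertions in turn, the first two serving as preparation for the genuinely substantial claim, namely stability under the root operators. First, the inclusion $\wp_\lambda \subset \wp$ is immediate from the defining formula for $\pi^\lambda_{(\underline{\tau},\underline{a})}$: it describes a continuous, piecewise linear map $[0,1]\to P_{\mathbb{R}}$ with $\pi(0)=0$, hence an element of $\wp$. For the statement about the minimum, fix $\pi\in\wp_\lambda$ and a simple root $\alpha$. On the $j$-th linear piece $[a_{j-1},a_j]$ the function $h_\alpha(t)=(\pi(t)|\alpha^\vee)$ has constant slope $(\tau_j(\lambda)|\alpha^\vee)$, which is an integer because $\tau_j(\lambda)$ is a weight and $\alpha^\vee$ a coroot. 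Being piecewise linear with integer slopes, $h_\alpha$ attains its minimum over $[0,1]$ at a breakpoint (or at $t=0$, where its value is $0$). I would then argue that the value at the minimizing breakpoint is itself an integer: there the slope passes from negative to nonnegative, and the $a$-chain conditions built into the definition of a Lakshmibai--Seshadri path are exactly what force the accumulated height to be integral. Since $Q$ is by definition the least integer value of $h_\alpha$, this gives $\min_{t\in[0,1]}h_\alpha(t)=Q$.

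Second, I turn to stability, treating $f_\alpha$ (the case of $e_\alpha$ being symmetric). Write $\pi=\pi^\lambda_{(\underline{\tau},\underline{a})}\in\wp_\lambda$; by the previous step $Q\in\mathbb{Z}$, and by Lemma \ref{maxminef} we have $f_\alpha(\pi)\ne 0$ precisely when $[h_\alpha(1)-Q]>0$. The point $p$ is a breakpoint of $\pi$, since $h_\alpha$ has integer slopes and attains its minimal integer value $Q$ there; the point $x$, where $h_\alpha$ first returns to $Q+1$ after $p$, may instead lie in the interior of a linear piece and so introduce a new breakpoint. Thus $f_\alpha(\pi)=\hat{\pi}_1 * s_\alpha(\hat{\pi}_2)*\hat{\pi}_3$ is again a rational $W$-path, obtained from $\underline{\tau}$ by replacing each coset $\tau_j$ occurring on $[p,x]$ with $s_\alpha\tau_j$ and, if necessary, splitting one piece at $x$.

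It remains to check the two defining conditions of a Lakshmibai--Seshadri path for this new sequence: that consecutive cosets stay strictly decreasing in the Bruhat order on $W/W_\lambda$, and that the requisite $a_i$-chains persist. The key technical input, and the main obstacle, is a lemma describing how left multiplication by $s_\alpha$ acts on an $a$-chain $\kappa_0>\dots>\kappa_s$ with $a(\kappa_i(\lambda)|\beta_i^\vee)\in\mathbb{Z}$ and $l(\kappa_i)-1=l(\kappa_{i-1})$: one must show that reflecting the relevant sub-chain either preserves it or alters it by inserting or deleting a single $s_\alpha$-step, while respecting both the length conditions and the integrality of the chain roots. Establishing this compatibility of $s_\alpha$ with the chain structure is where essentially all the combinatorial difficulty resides; granting it, one concludes $f_\alpha(\pi)\in\{0\}\cup\wp_\lambda$, and Lemma \ref{duoi} confirms that the endpoint shifts by $-\alpha$ as it should.
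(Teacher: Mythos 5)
This lemma is quoted from Littelmann \cite{Peter} (his Lemma~3.5 and Proposition~4.2); the paper itself gives no proof, so there is nothing internal to compare against. Your outline does follow the strategy of the source: integrality of the minimum via the slopes $(\tau_j(\lambda)|\alpha^\vee)$ and the chain conditions, then stability of $\{0\}\cup\wp_\lambda$ under $f_\alpha$ by reflecting the cosets supported on $[p,x]$ and checking the Bruhat and chain conditions for the new sequence.

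However, as a proof the proposal has a genuine gap, and you name it yourself: both substantive claims are reduced to the assertion that ``the $a$-chain conditions are exactly what force'' the conclusion, without carrying out the argument. For the minimum, it is not enough to say the slope changes sign at the minimizing breakpoint; one must actually show that if $(\tau_j(\lambda)|\alpha^\vee)<0\le(\tau_{j+1}(\lambda)|\alpha^\vee)$ then the $a_j$-chain joining $\tau_j$ to $\tau_{j+1}$ necessarily contains the step $s_\alpha\tau_j<\tau_j$, whence $a_j(\tau_j(\lambda)|\alpha^\vee)\in\mathbb{Z}$, and then assemble these local integrality statements into integrality of the accumulated value $\sum_{i\le j}(a_i-a_{i-1})(\tau_i(\lambda)|\alpha^\vee)$ at the minimum. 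For stability, the compatibility of left multiplication by $s_\alpha$ with $a$-chains (that reflecting the relevant sub-chain preserves the length condition $l(\kappa_i)=l(\kappa_{i-1})+1$ and the integrality $a(\kappa_i(\lambda)|\beta_i^\vee)\in\mathbb{Z}$, up to inserting or deleting one $s_\alpha$-step) is precisely the content of Littelmann's technical lemmas and occupies most of his proof; ``granting it'' concedes the entire difficulty. There is also a small unexamined point at the seams: at $t=p$ and $t=x$ you must verify that the juxtaposed cosets (e.g.\ $\tau_j$ against $s_\alpha\tau_{j+1}$, or $s_\alpha\tau_j$ against $\tau_j$ when $x$ splits a piece) remain strictly decreasing in $W/W_\lambda$ and admit the required chains, which again rests on the omitted lemma. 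So the skeleton is right, but the proof is not complete; it defers exactly the steps that make the statement true.
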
 

\begin{theo}[Character formula, \cite{Peter}] \label{charpath} Let $\mathfrak{g}$ be a symmetrisable Kac-Moody algebra and $\lambda \in P_+$. Then the set $\wp_\lambda$ is equal to the set of all paths $\pi$ of the form \begin{equation*}
\pi= f_{\alpha_1} \circ f_{\alpha_2} \circ \dots \circ f_{\alpha_s}(\pi_\lambda), 
\end{equation*} where $\alpha_1,\dots,\alpha_s$ are simple roots and $\pi_\lambda$ is the straight line in $P_\mathbb{R}$ connecting $0$ with $\lambda$. Furthermore, \begin{equation*}ch_\lambda = \sum\limits_{\pi \in \wp_\lambda}e_{\pi(1)}.
\end{equation*}
\end{theo}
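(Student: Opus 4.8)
The plan is to prove the two assertions of the statement — the generation of $\wp_\lambda$ by lowering operators and the character identity — in that order, since the character computation will rely on the crystal-like structure exposed by the first part.

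First I would establish the set equality $\wp_\lambda=\{f_{\alpha_1}\circ\cdots\circ f_{\alpha_s}(\pi_\lambda)\}$. The inclusion $\supseteq$ is almost immediate: the straight line $\pi_\lambda$ is the Lakshmibai--Seshadri path attached to the trivial data $\underline{\tau}\colon\tau_1=\mathrm{id}$ and $\underline{a}\colon a_0=0<a_1=1$, so $\pi_\lambda\in\wp_\lambda$; then Lemma \ref{Qmin}, which asserts that $\{0\}\cup\wp_\lambda$ is stable under every $f_\alpha$, shows that all nonzero paths obtained by applying lowering operators to $\pi_\lambda$ again lie in $\wp_\lambda$. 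For the reverse inclusion $\subseteq$ I would argue by ``climbing'': given $\pi\in\wp_\lambda$, I repeatedly apply raising operators $e_\alpha$. By Lemma \ref{duoi} each application strictly raises the endpoint by a positive root, and since all endpoints are weights bounded above by $\lambda$ in the root order, the process terminates after finitely many steps (controlled by Lemma \ref{maxminef}, which gives $e_\alpha^n\pi=0$ precisely when $n>-Q$). The terminal path $\pi_0$ satisfies $e_\alpha\pi_0=0$ for all simple $\alpha$, that is $Q=0$ in every simple direction, so by Lemma \ref{Qmin} we get $\min_t h_\alpha(t)=Q=0$ and hence $h_\alpha(t)=(\pi_0(t)|\alpha^\vee)\ge 0$ throughout; together with the LS-chain condition this pins down $\pi_0=\pi_\lambda$ as the unique source (the only LS path of shape $\lambda$ lying in the dominant chamber). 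Reversing the sequence of $e_\alpha$'s then expresses $\pi$ as a chain of $f_\alpha$'s applied to $\pi_\lambda$.

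With the generation established, I would compute $\chi:=\sum_{\pi\in\wp_\lambda}e_{\pi(1)}$. The first step is a Weyl-symmetry: for each simple root $\alpha$ define an operator $T_\alpha$ on $\wp_\lambda$ by $T_\alpha(\pi):=f_\alpha^{\,m}(\pi)$ when $m:=(\pi(1)|\alpha^\vee)\ge 0$ and $T_\alpha(\pi):=e_\alpha^{\,-m}(\pi)$ when $m<0$. Using Lemmas \ref{duoi} and \ref{maxminef} one checks that $T_\alpha$ is a well-defined involution of $\wp_\lambda$ sending a path with endpoint $\mu$ to a path with endpoint $s_\alpha(\mu)$. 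Since $T_\alpha$ permutes $\wp_\lambda$ and realizes $s_\alpha$ on endpoints, the multiset $\{\pi(1):\pi\in\wp_\lambda\}$ is invariant under every simple reflection and hence under all of $W$, so $\chi$ is $W$-invariant; moreover the source analysis above shows that $\pi_\lambda$ is the only path whose endpoint is the dominant weight $\lambda$, so the coefficient of $e_\lambda$ in $\chi$ equals $1$.

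Finally I would identify $\chi$ with $ch_\lambda$. The cleanest route is to multiply $\chi$ by the Weyl--Kac denominator $D=\sum_{w\in W}\epsilon(w)e_{w(\rho)}$ and prove $\chi\cdot D=\sum_{w\in W}\epsilon(w)e_{w(\lambda+\rho)}$, which by Theorem \ref{Kachar} is exactly $ch_\lambda\cdot D$. The $W$-invariance of $\chi$ lets one rewrite $\chi\cdot D$ as an alternating sum $\sum_{w}\epsilon(w)e_{w(\cdot)}$ indexed by dominant data, and the content is that every contribution other than $e_{\lambda+\rho}$ must cancel. I expect \emph{this cancellation to be the main obstacle}: $W$-invariance together with $m_\lambda=1$ is by itself insufficient (one could add $ch_\mu$ for a dominant $\mu<\lambda$), so the argument must use the finer structure of the path crystal, namely that $\wp_\lambda$ is connected under the operators $e_\alpha,f_\alpha$ with $\pi_\lambda$ its unique source. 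I would therefore combine the denominator identity with an induction on the root-order partial order, reducing a minimal discrepancy between $\chi$ and $ch_\lambda$ to the existence of a second source, which the first part rules out. As an independent cross-check, one may instead verify directly that the number of LS paths ending at $\mu$ equals $m_\lambda(\mu)$ from Theorem \ref{multpart}, matching the Kostant-partition expression weight by weight.
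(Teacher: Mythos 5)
The paper does not prove this theorem; it is quoted from Littelmann \cite{Peter} as a preliminary, so your attempt can only be measured against the known proof there. Your first half (the generation of $\wp_\lambda$ from $\pi_\lambda$ by the $f_\alpha$) is essentially Littelmann's argument and is sound: stability of $\{0\}\cup\wp_\lambda$ under the operators, termination of the climbing process, the identification of a source ($e_\alpha\pi_0=0$ for all $\alpha$ forces $Q=0$, hence $h_\alpha\ge 0$ by Lemma \ref{Qmin}), and the observation that $\tau_1=\mathrm{id}$ forces $r=1$ so that $\pi_\lambda$ is the unique dominant LS path. The $W$-invariance of $\chi=\sum_\pi e_{\pi(1)}$ via the string involutions $T_\alpha$ is also fine, modulo citing that $e_\alpha$ and $f_\alpha$ are partial inverses (not among the lemmas you invoke, but standard in \cite{Peter}).

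The genuine gap is the last step. You correctly identify that $W$-invariance plus the coefficient of $e_\lambda$ being $1$ is insufficient, but your proposed repair --- ``reduce a minimal discrepancy between $\chi$ and $ch_\lambda$ to the existence of a second source'' --- is not an argument: you never explain how a surplus $c_\mu\ne 0$ at a maximal dominant $\mu<\lambda$ would manufacture a path killed by all $e_\alpha$. Indeed it cannot be done at this level of generality: a set with root operators that decomposes into honest $\mathfrak{sl}_2$-strings for each simple $\alpha$ separately, is connected, and has a unique source need \emph{not} have character $ch_\lambda$ (this failure is exactly why Stembridge-type local axioms, or Littelmann's finer analysis, are needed). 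The step that actually closes the proof is the one you name and then abandon: multiply by the denominator and exhibit a sign-reversing involution on pairs $(\pi,w)\in\wp_\lambda\times W$ with $\pi(1)+w(\rho)$ non-regular-dominant, pairing $(\pi,w)$ with $(f_\alpha^{k}\pi\ \text{or}\ e_\alpha^{k}\pi,\,ws_\alpha)$ for a canonically chosen $\alpha$ and $k$, so that only the terms $\epsilon(w)e_{w(\lambda+\rho)}$ survive. This is Littelmann's proof, and it is precisely the mechanism this paper reuses in Lemma \ref{cancel} for the branching rule. Your closing ``cross-check'' (counting LS paths ending at $\mu$ against Theorem \ref{multpart}) is not an independent verification --- it is the statement to be proved.
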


\section{The main results}\label{result} 
Let $\mathfrak{g}$ be an affine Kac-Moody algebra and $A$ the generalized Cartan matrix which defines $\mathfrak{g}$. Fix $u\in \mathbb{Z}_{>0}$ relative prime to $a_0$ in the null vector $a$ of $A$. Let $\mathfrak{g}[u]$ be the corresponding winding subalgebra. For each $k\in\mathbb{N}$, let $P_+^k$ be the set of all dominant integral weights of level $k$. For each integrable irreducible highest weight $\mathfrak{g}$-module $L(\lambda)$ of level $k>0$, we can decompose it into a direct sum of integrable irreducible highest weight modules $\dot{L}(\lambda')$ of level $uk$ (since $\dot{K}=uK$). Each $\dot{L}(\lambda')$ appears in $L(\lambda)$ with finite multiplicity, which we denote by $c_\lambda^{\lambda'}$ \begin{equation*}L(\lambda)=\bigoplus\limits_{\lambda'\in\dot{P}_+^{uk}}c_{\lambda}^{\lambda'}\dot{L}(\lambda').
\end{equation*}

\noindent A path $\pi \in \wp_\lambda$ is called $\mathfrak{g}[u]$-dominant if its image is contained in the dominant Weyl chamber of the root system of $\mathfrak{g}[u]$. We obtain a branching rule for $\mathfrak{g}[u]\subset \mathfrak{g}$ in the following theorem.
\begin{theo}\label{windingdecomp} Let $\mathfrak{g}$ be an affine Kac-Moody Lie algebra and $\mathfrak{g}[u]$ be a winding subalgebra of $\mathfrak{g}$. Then for any $\lambda\in P_+$, we have 
\begin{equation*}
L(\lambda)=\bigoplus\limits_{\pi}\dot{L}(\pi(1)),
\end{equation*}
where the sum runs over all $\mathfrak{g}[u]$-dominant paths in $\wp_{\lambda}$. In particular, for each $\lambda' \in \dot{P}_+$ the multiplicity $c_\lambda^{\lambda'}$ is equal the number of $\mathfrak{g}[u]$-dominant paths in $\wp_\lambda$ with end point $\lambda'$. 
\end{theo}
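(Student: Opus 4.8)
The plan is to turn $\wp_\lambda$ into a $\mathfrak{g}[u]$-crystal and decompose it into connected components, each of which will account for exactly one summand $\dot{L}(\lambda')$. Write $\dot{e}_i := e_{\dot{\alpha_i}}$ and $\dot{f}_i := f_{\dot{\alpha_i}}$ for the Littelmann path operators attached to the simple roots $\dot{\alpha_i}$ of $\mathfrak{g}[u]$ (with reflection $\dot{s_i}$ and function $h_{\dot{\alpha_i}}(t) = \langle \pi(t), \dot{h_i}\rangle$). Since $\dot{\alpha_i} = \alpha_i$ and $\dot{h_i} = h_i$ for $i > 0$, these coincide with the operators of $\mathfrak{g}$ there, so the only genuinely new operators are $\dot{e}_0, \dot{f}_0$. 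Using Lemma \ref{maxminef} together with the ``$\min = Q$'' statement of Lemma \ref{Qmin}, a path $\pi \in \wp_\lambda$ satisfies $\dot{e}_i(\pi) = 0$ for all $i$ if and only if $h_{\dot{\alpha_i}}(t) \ge 0$ for all $t$ and all $i$, i.e. if and only if $\pi$ is $\mathfrak{g}[u]$-dominant. Thus the $\mathfrak{g}[u]$-dominant paths are precisely the sources of the $\mathfrak{g}[u]$-crystal, and the theorem reduces to showing that each connected component contributes the single character $\dot{ch}_{\pi_0(1)}$ of its source $\pi_0$.

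The main obstacle is to show that $\{0\} \cup \wp_\lambda$ is stable under $\dot{e}_0$ and $\dot{f}_0$ (for $i > 0$ stability is immediate from Lemma \ref{Qmin}). The key structural observation is that $\dot{\alpha_0} = \frac{u-1}{a_0}\delta + \alpha_0$ is in fact a positive real root of $\mathfrak{g}$: the mark $a_0$ of an affine Cartan matrix is either $1$ or $2$, and since $u$ is prime to $a_0$ we get $a_0 \mid u-1$, so $\frac{u-1}{a_0} = n \in \mathbb{Z}_{\ge 0}$ and $\dot{\alpha_0} = \alpha_0 + n\delta$ is a real root of $\mathfrak{g}$ with coroot $\dot{h_0}$. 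Consequently the winding reflection $\dot{s_0} = s_{\dot{\alpha_0}}$ lies in $W$, whence $\dot{W} \subseteq W$ and, crucially, reflecting a segment $\tau(\lambda)$ of a path by $\dot{s_0}$ again produces a $W$-translate of $\lambda$. I would then check that applying $\dot{e}_0, \dot{f}_0$ to an element of $\wp_\lambda$ preserves both the chain conditions and the ``$\min = Q$'' property, thereby extending the stability of Lemma \ref{Qmin} from the simple roots of $\mathfrak{g}$ to the real root $\dot{\alpha_0}$. This is where the winding really enters: the identity $h_{\dot{\alpha_0}}(t) = h_{\alpha_0}(t) + \frac{(u-1)k}{c_0}t$ shows that the relevant datum differs from that of $\mathfrak{g}$ by a level-dependent shear, and one must verify that the integrality and chain hypotheses survive it.

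Granting stability, $\wp_\lambda$ becomes a $\mathfrak{g}[u]$-crystal and decomposes into connected components under $\langle \dot{e}_i, \dot{f}_i \rangle$, each containing a unique source $\pi_0$, namely a $\mathfrak{g}[u]$-dominant path. For a fixed source $\pi_0$ with endpoint $\lambda' := \pi_0(1)$, I would apply the generation and character theorem (Theorem \ref{charpath}) to the algebra $\mathfrak{g}[u]$: the component $\{\dot{f}_{i_1}\cdots\dot{f}_{i_s}(\pi_0)\} \setminus \{0\}$ generated from the $\mathfrak{g}[u]$-dominant path $\pi_0$ has character $\sum_{\pi} e_{\pi(1)} = \dot{ch}_{\lambda'}$, depending only on the endpoint $\lambda'$. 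One must also confirm the integrality that makes $\dot{L}(\lambda')$ meaningful, i.e. $\lambda' \in \dot{P}_+^{uk}$: the pairings $\langle \lambda', \dot{h_i}\rangle = \langle \lambda', h_i\rangle \ge 0$ for $i>0$ are integral since $\lambda' \in P$, while $\langle \lambda', \dot{h_0}\rangle = \langle \lambda', h_0\rangle + \frac{(u-1)k}{c_0}$ is nonnegative and should be checked integral, the level being $uk$ by $\dot{K} = uK$.

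Finally, summing over components and invoking Theorem \ref{charpath} for $\mathfrak{g}$ gives
\[
ch_\lambda = \sum_{\pi \in \wp_\lambda} e_{\pi(1)} = \sum_{\pi_0 \ \mathfrak{g}[u]\text{-dominant}} \ \sum_{\pi \in \mathrm{comp}(\pi_0)} e_{\pi(1)} = \sum_{\pi_0 \ \mathfrak{g}[u]\text{-dominant}} \dot{ch}_{\pi_0(1)}.
\]
Since the irreducible characters $\dot{ch}_{\lambda'}$ are linearly independent, this forces $L(\lambda) = \bigoplus_{\pi_0} \dot{L}(\pi_0(1))$ as $\mathfrak{g}[u]$-modules, the sum running over $\mathfrak{g}[u]$-dominant paths, and reading off the coefficient of $\dot{ch}_{\lambda'}$ yields $c_\lambda^{\lambda'} = \#\{\pi_0 : \pi_0 \ \mathfrak{g}[u]\text{-dominant},\ \pi_0(1) = \lambda'\}$. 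I expect the decisive difficulty to be the stability step, since the reflection $\dot{s_0}$ mixes the $\alpha_0$-direction with the imaginary direction $\delta$, and one must show that the Lakshmibai--Seshadri combinatorics is insensitive to this winding.
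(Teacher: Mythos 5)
Your proposal is correct in outline but takes a genuinely different route from the paper. You turn $\wp_\lambda$ into a $\mathfrak{g}[u]$-crystal and decompose it into connected components, identifying the character of each component with $\dot{ch}_{\pi_0(1)}$ of its unique source $\pi_0$; the paper instead stays entirely at the level of characters: it inverts the Weyl--Kac denominator of $\mathfrak{g}[u]$ to obtain $c_\lambda^{\eta}=\sum_{\sigma\in\dot{W}}\epsilon(\sigma)\,m_\lambda(\sigma(\eta+\dot{\rho})-\dot{\rho})$ for dominant $\eta$, rewrites $m_\lambda$ as a count of Lakshmibai-Seshadri paths via Theorem \ref{charpath}, and then cancels the contributions of the non-dominant paths in pairs by the sign-reversing map $\pi\mapsto f_\alpha^{1+(\pi(1)|\alpha^\vee)}(\pi)$ of Lemma \ref{cancel}. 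Both arguments hinge on the same key point, which you isolate correctly: $\dot{\alpha_0}=\alpha_0+\tfrac{u-1}{a_0}\delta$ is a positive real root of $\mathfrak{g}$, so $\{0\}\cup\wp_\lambda$ is stable under $e_{\dot{\alpha_0}},f_{\dot{\alpha_0}}$ and the minima of $h_{\dot{\alpha_0}}$ are integral; this is the positive-real-root form of Lemma \ref{Qmin} (Littelmann's Proposition 4.2 in \cite{Peter}), and the paper uses exactly the same fact inside Lemma \ref{cancel} when it asserts $f_\alpha(M_\pi)\subset M_\pi\cup\{0\}$ for a simple root $\alpha$ of $\mathfrak{g}[u]$. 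The one step where you lean on more than the paper quotes is the claim that the component generated from a $\mathfrak{g}[u]$-dominant source $\pi_0$ has character $\dot{ch}_{\pi_0(1)}$: Theorem \ref{charpath} as stated only covers the component of the straight-line path $\pi_{\lambda'}$, whereas your $\pi_0$ is generally not a straight line, so you need Littelmann's decomposition/isomorphism theorem for components generated by an arbitrary dominant path with integral minima; that result is available in \cite{Peter} but should be cited in place of Theorem \ref{charpath}. As for what each approach buys: yours produces an actual crystal decomposition (hence a structural bijection between summands and dominant paths, compatible with further restriction), while the paper's cancellation argument gets by with only the endpoint Lemma \ref{duoi}, Lemma \ref{maxminef}, and linear independence of the $\dot{ch}_\eta$, and it sets up the Weyl-group manipulations reused verbatim in the Steinberg-type formula that follows.
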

\begin{proof}
By using Kac's character formula for $\dot{ch}_{\lambda'}$, the equality \begin{equation*}
ch_\lambda =\sum\limits_{\lambda' \in \dot{P}_+^{uk}} c_{\lambda}^{\lambda'} \dot{ch}_{\lambda'}
\end{equation*} becomes 
\begin{equation}
\sum\limits_{\mu}m_\lambda(\mu)\sum\limits_{\sigma\in\dot{W}}\epsilon(\sigma)e_{\mu+\sigma(\dot{\rho})} = \sum\limits_{\lambda'\in \dot{P}_+^{uk}}c_\lambda^{\lambda'}\sum\limits_{\sigma\in\dot{W}}\epsilon(\sigma)e_{\sigma(\lambda'+\dot{\rho})}. \label{eq:char1'}
\end{equation}
Replace $\mu$ by $\sigma(\mu)$, then the left-hand side of the equation (\ref{eq:char1'}) does not change. So we have
\begin{equation}
\sum\limits_{\mu}m_\lambda(\mu)\sum\limits_{\sigma\in\dot{W}}\epsilon(\sigma)e_{\sigma(\mu+\dot{\rho})} = \sum\limits_{\lambda'\in \dot{P}_+^{uk}}c_\lambda^{\lambda'}\sum\limits_{\sigma\in\dot{W}}\epsilon(\sigma)e_{\sigma(\lambda'+\dot{\rho})}. \label{eq:char2'}
\end{equation}  We change the variable both sides of the equation (\ref{eq:char2'}) by putting $\eta = \sigma(\mu+\dot{\rho}) -\dot{\rho}$ on the left, $\eta = \sigma(\lambda'+\dot{\rho})-\dot{\rho}$ on the right, and then replacing $\sigma^{-1}$ by $\sigma$ to obtain 
\begin{equation}
\sum\limits_{\eta}\sum\limits_{\sigma\in\dot{W}} \epsilon(\sigma)m_\lambda(\sigma(\eta+\dot{\rho})-\dot{\rho})e_{\eta+\dot{\rho}} = \sum\limits_{\eta}\sum\limits_{\sigma\in\dot{W}}\epsilon(\sigma)c_\lambda^{\sigma(\eta+\dot{\rho})-\dot{\rho}}e_{\eta+\dot{\rho}}. \label{eq:char3'}
\end{equation}
As a weight of $\mathfrak{g}[u]$, if $\eta$ is dominant then $\eta +\dot{\rho}$ is strictly dominant. Then $\sigma(\eta+\dot{\rho})-\dot{\rho}$ is not dominant unless $\sigma=1$. Hence $c_{\lambda}^{\sigma(\eta +\dot{\rho})-\dot{\rho}}=0$ unless $\sigma =1$. So, if we restrict both sides of the equation (\ref{eq:char3'}) over $\eta \in \dot{P}_+$ and then delete $e_{\dot{\rho}}$ both sides we get 
\begin{equation}
\sum\limits_{\eta \in \dot{P}_+}\sum\limits_{\sigma\in\dot{W}} \epsilon(\sigma)m_\lambda(\sigma(\eta+\dot{\rho})-\dot{\rho})e_{\eta} = \sum\limits_{\eta \in \dot{P}_+}c_\lambda^\eta e_{\eta}. \label{eq:char4'}
\end{equation}
We see that if there exist $\sigma \in \dot{W}$ such that $\sigma(\mu+\dot{\rho})-\dot{\rho}$ is a dominant weight then $\sigma$ and $\eta$ are uniquely determined by $\mu$. Then we set $p(\mu):=\epsilon(\sigma)$ and $\{\mu\}:=\eta$. In the case there does not exist $\sigma \in \dot{W}$ satisfing above condition, we set $p(\mu):=0$ and $\{\mu\}:=0$. By Theorem \ref{charpath}, each weight $\mu$ of $L(\lambda)$ is an endpoint $\pi(1)$ of some path $\pi\in \wp_\lambda$. Furthermore, the multiplicity $m_\lambda(\mu)$ is equal to the number of Lakshmibai-Seshadri paths of shape $\lambda$ with endpoint $\mu$. So, in the language of paths, we can rewrite the equation (\ref{eq:char4'}) as 
\begin{equation*}
\sum\limits_{\pi\in \wp_\lambda}p(\pi(1))e_{\{\pi(1)\}}=\sum\limits_{\eta\in \dot{P}_+}c^\eta_\lambda e_\eta. \label{eq:char5'}
\end{equation*} 
By character formula in Theorem \ref{charpath}, we obtain 
\begin{equation}
\sum\limits_{\pi\in \wp_\lambda}p(\pi(1))\dot{ch}_{\{\pi(1)\}}=\sum\limits_{\eta\in\dot{P}_+}c_\lambda^\eta \dot{ch}_\eta. 
\label{eq:char6'}
\end{equation}
Let $\Omega$ be the set of all $\pi$ in $\wp_\lambda$ such that $\pi$ is $\mathfrak{g}[u]$-dominant and let $\Omega'$ be the completion of $\Omega$ in $\wp_\lambda$. We see that for each $\pi \in \Omega$, $\pi(1)$ is dominant then $p(\pi(1))=1$ and $\{\pi(1)\}=\pi(1)$. By Lemma \ref{cancel} below, we show that for each $\pi \in \Omega'$, there exist $\bar{\pi} \in \Omega'$ such that $p(\bar{\pi}(1))=-p(\pi(1))$ and $\{\bar{\pi}(1)\}=\{\pi(1)\}$. Hence by equation (\ref{eq:char6'}) we have 
\begin{equation*}
\sum\limits_{\pi\in \Omega}\dot{ch}_{\pi(1)}=\sum\limits_{\eta\in \dot{P}_+}c^\eta_\lambda \dot{ch}_\eta.
\end{equation*}
It implies that the set of all $\eta \in \dot{P}_+^{uk}$ such that $\dot{L}(\eta) \subset L(\lambda)$ as $\mathfrak{g}[u]$-modules is the set of endpoints of paths in $\Omega$. The multiplicity $c_\lambda^\eta$ is then equal to the number of paths in $\Omega$ with endpoint $\eta$. 
\end{proof}

\begin{lemm}\label{cancel} With the notation in the proof of Theorem \ref{windingdecomp}. For each $\pi \in \Omega'$, there exist $\bar{\pi}\in \Omega'$ such that $p(\bar{\pi}(1))=-p(\pi(1))$ and $\{\bar{\pi}(1)\}=\{\pi(1)\}$. 
\end{lemm}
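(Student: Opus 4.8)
The plan is to produce $\bar\pi$ by reflecting $\pi$ across a wall of the $\mathfrak{g}[u]$-dominant chamber that it violates, realizing at the level of paths the $\dot{\rho}$-shifted simple reflection $\dot{s_i}\cdot x:=\dot{s_i}(x+\dot{\rho})-\dot{\rho}$. The two facts that make this work are that $\{0\}\cup\wp_\lambda$ is stable under $e_{\dot{\alpha_i}}$ and $f_{\dot{\alpha_i}}$ (Lemma \ref{Qmin}) and that these operators move the endpoint of a path by $\pm\dot{\alpha_i}$ (Lemma \ref{duoi}); combined with the shift by $\dot{\rho}$ (for which $(\dot{\rho}\mid\dot{\alpha_i}^\vee)=1$), a suitable power of $f_{\dot{\alpha_i}}$ or $e_{\dot{\alpha_i}}$ will send $\pi(1)$ exactly to $\dot{s_i}\cdot\pi(1)$.

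First I would record the single-root data. For each $i$ set $h_i(t):=(\pi(t)\mid\dot{\alpha_i}^\vee)$ and $Q_i:=\min_{t\in[0,1]}h_i(t)$; by Lemma \ref{Qmin} this minimum is the integer $Q$ attached to $\dot{\alpha_i}$, and $h_i(1)$ is an integer because $\pi(1)$ is a weight of the $\mathfrak{g}[u]$-integrable module $L(\lambda)$. By Lemma \ref{maxminef} the largest power of $e_{\dot{\alpha_i}}$ not killing $\pi$ is $-Q_i$ and the largest power of $f_{\dot{\alpha_i}}$ is $h_i(1)-Q_i$. Since $\pi(0)=0$ we have $Q_i\le 0$ for all $i$, and $\pi$ is $\mathfrak{g}[u]$-dominant exactly when every $Q_i=0$; as $\pi\in\Omega'$ there is an index $i$ with $Q_i\le -1$, which I fix. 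Putting $n:=h_i(1)+1$, I define $\bar\pi:=f_{\dot{\alpha_i}}^{\,n}(\pi)$ if $n\ge 0$ and $\bar\pi:=e_{\dot{\alpha_i}}^{\,-n}(\pi)$ if $n<0$. This is nonzero: in the first case $n\le h_i(1)-Q_i$ because $Q_i\le-1$, and in the second $-n\le -Q_i$ because $h_i(1)\ge Q_i$; stability then gives $\bar\pi\in\wp_\lambda$.

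Next I would verify the endpoint identity and read off the two required equalities. By Lemma \ref{duoi} each application of $f_{\dot{\alpha_i}}$ (resp. $e_{\dot{\alpha_i}}$) subtracts (resp. adds) $\dot{\alpha_i}$ from the endpoint, so in both cases $\bar\pi(1)=\pi(1)-n\,\dot{\alpha_i}=\pi(1)-\big((\pi(1)\mid\dot{\alpha_i}^\vee)+1\big)\dot{\alpha_i}=\dot{s_i}(\pi(1)+\dot{\rho})-\dot{\rho}=\dot{s_i}\cdot\pi(1)$. Hence $\bar\pi(1)$ and $\pi(1)$ lie in one orbit of the shifted $\dot{W}$-action, so they share the same dominant representative, giving $\{\bar\pi(1)\}=\{\pi(1)\}$; and the distinguished Weyl element carrying $\bar\pi(1)+\dot{\rho}$ into the dominant chamber is obtained from that of $\pi(1)$ by right multiplication by $\dot{s_i}$, so $p(\bar\pi(1))=\epsilon(\dot{s_i})\,p(\pi(1))=-p(\pi(1))$. (Both $p$-values vanish precisely when $\pi(1)+\dot{\rho}$ is $\dot{W}$-singular, and then $\bar\pi$ serves as its own, harmless, partner.)

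It remains to check $\bar\pi\in\Omega'$, and this is the step I expect to require the most care, since a priori reflecting across a wall could land in the dominant set $\Omega$. Here the shift by $\dot{\rho}$ saves the day: $\bar\pi$ is the element of the $\dot{\alpha_i}$-string through $\pi$ lying $n$ steps below $\pi$, hence $b+1$ steps below the top, where $b:=h_i(1)-Q_i\ge 0$; therefore the largest power of $e_{\dot{\alpha_i}}$ not killing $\bar\pi$ equals $b+1\ge 1$, so $\min_{t}(\bar\pi(t)\mid\dot{\alpha_i}^\vee)\le -1$ and $\bar\pi$ violates the very same wall $\dot{\alpha_i}$, proving $\bar\pi\in\Omega'$. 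Finally, to deduce the pairwise cancellation used after (\ref{eq:char6'}) I would note that reflecting $\bar\pi$ across the same root $\dot{\alpha_i}$ returns $\pi$, so the assignment is involutive once the wall is chosen by a fixed rule; pinning down a canonical choice of $i$ that is itself preserved under the reflection (equivalently, organizing the pairing component-by-component along $\dot{\alpha_i}$-strings) is the one genuinely global point I would treat with care.
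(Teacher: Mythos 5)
Your construction is the same as the paper's: the paper also takes $\bar{\pi}=f_\alpha^{1+(\pi(1)\mid\alpha^\vee)}(\pi)$ for a violated simple wall $\alpha$ of $\mathfrak{g}[u]$, checks nonvanishing via Lemma \ref{maxminef} using $Q\le-1$, and gets the endpoint identity $\bar{\pi}(1)+\dot{\rho}=s_\alpha(\pi(1)+\dot{\rho})$ from Lemma \ref{duoi}. Your treatment is in places more careful than the paper's (you handle the case $1+(\pi(1)\mid\alpha^\vee)<0$ explicitly via $e_\alpha$, and you verify $\bar{\pi}\in\Omega'$ by the string argument, which the paper leaves implicit), and as a proof of the literal existence statement of Lemma \ref{cancel} what you wrote is complete.

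However, the point you flag at the end and leave open is a genuine gap relative to what the lemma must deliver: in the proof of Theorem \ref{windingdecomp} the lemma is invoked to cancel the entire contribution of $\Omega'$ to (\ref{eq:char6'}), and for that one needs the assignment $\pi\mapsto\bar{\pi}$ to be a sign-reversing involution (or at least a bijection between the $+$ and $-$ terms), not merely the existence of some partner for each $\pi$. The paper closes exactly this point with a canonical choice of wall: take $s$ to be the \emph{first} time at which $(\pi(s)\mid\alpha^\vee)=-1$ for some simple root $\alpha$ of $\mathfrak{g}[u]$, and partition $\Omega'$ into classes $M_{\pi'}$ of paths sharing the same initial segment up to that first crossing. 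Since $Q\le-1$ forces the path to pass through level $-1$ before reaching its minimum, one has $s\le p$, so $f_\alpha$ (which only alters the path after the last minimum point $p$) preserves each class: $f_\alpha(M_{\pi'})\subset M_{\pi'}\cup\{0\}$. Hence the distinguished root $\alpha$ attached to $\bar{\pi}$ is the same as the one attached to $\pi$, and the pairing is involutive class by class --- which is precisely the ``fixed rule, preserved under the reflection'' you were looking for. I would add this decomposition to your argument; without it the cancellation in the theorem does not follow from the statement you proved.
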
 
\begin{proof}
For each $\pi \in \Omega'$, there exist $t\in [0,1]$ such that $\pi(t)$ is not contained in the dominant Weyl chamber of $\mathfrak{g}[u]$. It means that there exist a simple root $\alpha$ of $\mathfrak{g}[u]$ and $t\in [0,1]$ such that $(\pi(t)|\alpha^\vee)<0$. By Lemma \ref{Qmin}, $\min_{t\in[0,1]}\{(\pi(t)|\alpha^\vee)\}=Q$ is an integer, we can choose $s$ to be the minimum value of $t\in [0,1]$ and a simple root $\alpha$ of $\mathfrak{g}[u]$ such that $(\pi(s)|\alpha^\vee)=-1$. Let $M_\pi$ be the subset of $\wp_\lambda$ of all paths $\pi'$ such that $\pi'(zt)= \pi(st)$ for some $z\in [0,1]$. Clearly that $M_\pi \subset \Omega'$ for each $\pi \in \Omega'$. By the minimum value of $s$, two sets $M_\pi, M_{\pi'}$ are the same set or have no common path for each $\pi,\pi' \in \Omega'$. Hence $\Omega'$ is the disjoint union of some subsets $M_\pi$ for $\pi \in \Omega'$.

Fix a path $\pi'\in\Omega'$. We need to show that for each $\pi \in M_{\pi'}$, we can find a path $\bar{\pi}$ in $M_{\pi'}$ that satisfies the statement of the lemma. Or equivalently, for each $(\pi,\sigma)\in M_{\pi'} \times \dot{W}$ such that $\sigma(\pi(1)+\dot{\rho})-\dot{\rho} \in \dot{P}_+$, there exist $(\bar{\pi},\bar{\sigma})\in M_{\pi'} \times \dot{W}$ such that $\epsilon(\sigma)=-\epsilon(\bar{\sigma})$ and $\sigma(\pi(1)+\dot{\rho})=\bar{\sigma}(\bar{\pi}(1)+\dot{\rho})$. This can be done by choosing $\bar{\sigma}=\sigma s_\alpha$ and $\bar{\pi} = f_\alpha^{1+(\pi(1)|\alpha^\vee)}(\pi)$. Indeed, we have $f_\alpha(M_\pi) \subset M_\pi \cup \{0\}$. Since $(\pi(1)|\alpha^\vee)+1 \leq (\pi(1)|\alpha^\vee)-Q$, we have $\bar{\pi}\ne 0$ by Lemma \ref{maxminef}. Finally, $\sigma(\pi(1)+\dot{\rho})=\bar{\sigma}(\bar{\pi}(1)+\dot{\rho})$ by Lemma \ref{duoi}.
\end{proof}

\begin{theo}Let $\mathfrak{g}$ be an affine Kac-Moody algebra and $\mathfrak{g}[u]$ be a winding subalgebra of $\mathfrak{g}$. Then for any $\lambda\in P_+^k$ and $\lambda' \in \dot{P}_+^{uk}$, we have \begin{equation*}
c_\lambda^{\lambda'}=\sum\limits_{\sigma\in W}\sum\limits_{\tau\in\dot{W}}\epsilon(\sigma\tau)\mathcal{P}(\sigma(\lambda+\rho)+\tau(\dot{\rho})-(\lambda'+\rho+\dot{\rho})).
\end{equation*}
\end{theo}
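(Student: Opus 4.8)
The plan is to combine the intermediate identity already obtained in the proof of Theorem~\ref{windingdecomp} with the generalized Kostant multiplicity formula (Theorem~\ref{multpart}). Reading off the coefficient of $e_\eta$ in equation~(\ref{eq:char4'}), that proof in fact establishes, for every $\lambda'\in\dot{P}_+^{uk}$, the identity
\[
c_\lambda^{\lambda'}=\sum_{\sigma\in\dot{W}}\epsilon(\sigma)\,m_\lambda\big(\sigma(\lambda'+\dot{\rho})-\dot{\rho}\big),
\]
which already expresses the branching multiplicity through the weight multiplicities $m_\lambda$ of $L(\lambda)$. So the remaining task is essentially formal: substitute the closed formula for $m_\lambda$ and reorganize the result into the symmetric shape of the statement.

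First I would recast the sum so that the $\dot{W}$-action is carried by $\dot{\rho}$ rather than by $\lambda'+\dot{\rho}$. The key point is that the restriction of $L(\lambda)$ to $\mathfrak{g}[u]$ is an integrable $\mathfrak{g}[u]$-module (this is precisely the decomposition defining the $c_\lambda^{\lambda'}$), so its formal character $ch_\lambda=\sum_\mu m_\lambda(\mu)e_\mu$ is invariant under $\dot{W}$; hence $m_\lambda(w\mu)=m_\lambda(\mu)$ for every $w\in\dot{W}$. Applying $\sigma^{-1}$ inside the argument and then reindexing $\sigma\mapsto\sigma^{-1}$ (using $\epsilon(\sigma^{-1})=\epsilon(\sigma)$) turns the identity into
\[
c_\lambda^{\lambda'}=\sum_{\tau\in\dot{W}}\epsilon(\tau)\,m_\lambda\big(\lambda'+\dot{\rho}-\tau(\dot{\rho})\big).
\]

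Next I would insert the generalized Kostant formula $m_\lambda(\mu)=\sum_{\sigma\in W}\epsilon(\sigma)\mathcal{P}(\sigma(\lambda+\rho)-(\mu+\rho))$ from Theorem~\ref{multpart}, taking $\mu=\lambda'+\dot{\rho}-\tau(\dot{\rho})$. The argument of $\mathcal{P}$ then becomes $\sigma(\lambda+\rho)-(\lambda'+\dot{\rho}-\tau(\dot{\rho})+\rho)=\sigma(\lambda+\rho)+\tau(\dot{\rho})-(\lambda'+\rho+\dot{\rho})$, and collecting the signs as $\epsilon(\sigma)\epsilon(\tau)=\epsilon(\sigma\tau)$ yields exactly the claimed double sum over $W\times\dot{W}$, which is the desired analog of Steinberg's formula.

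I expect the only genuinely delicate point to be the justification of the $\dot{W}$-invariance of $m_\lambda$ and the attendant reindexing; this is the same maneuver used in passing from~(\ref{eq:char1'}) to~(\ref{eq:char2'}) in the proof of Theorem~\ref{windingdecomp}, and it rests on the restricted module being $\mathfrak{g}[u]$-integrable. Everything else is a direct substitution. One should also record, exactly as for Theorem~\ref{multpart}, that although $W$ and $\dot{W}$ are infinite, for fixed $\lambda$ and $\lambda'$ only finitely many pairs $(\sigma,\tau)$ contribute, since $\mathcal{P}$ vanishes unless its argument is a non-negative integral combination of positive roots; this is what keeps the expression well defined.
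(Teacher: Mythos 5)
Your proposal is correct and follows essentially the same route as the paper: both arguments reduce the computation to the alternating sum $c_\lambda^{\lambda'}=\sum_{\sigma\in\dot{W}}\epsilon(\sigma)\,m_\lambda(\sigma(\lambda'+\dot{\rho})-\dot{\rho})$ via the Weyl denominator and the restriction to dominant $\eta$, and then substitute Theorem~\ref{multpart}. The only difference is organizational --- you reuse the intermediate identity~(\ref{eq:char4'}) from the proof of Theorem~\ref{windingdecomp} and apply the $\dot{W}$-invariance of $m_\lambda$ afterwards, whereas the paper inserts the Kostant formula before performing the change of variables --- and both versions are valid.
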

\begin{proof}
By Kac's character formula, the equality \begin{equation*}
ch_\lambda =\sum\limits_{\lambda' \in \dot{P}_+^{uk}} c_{\lambda}^{\lambda'} \dot{ch}_{\lambda'}
\end{equation*} becomes 
\begin{equation}ch_\lambda \sum\limits_{\sigma \in \dot{W}} \epsilon(\sigma)e_{\sigma(\dot{\rho})} = \sum\limits_{\lambda' \in \dot{P}_+^{uk}}c_{\lambda}^{\lambda'} \sum\limits_{\sigma\in \dot{W}}\epsilon(\sigma) e_{\sigma(\lambda'+\dot{\rho})}. \label{eq:char5} 
\end{equation} Now we use Theorem \ref{multpart}, to rewrite $ch_\lambda$ in the left-hand side of equation (\ref{eq:char5}), we get \begin{equation}\sum\limits_{\mu}\sum\limits_{\tau\in W}\sum\limits_{\sigma\in \dot{W}} \epsilon(\tau\sigma) \mathcal{P}(\tau(\lambda+\rho)-(\mu+\rho))e_{\sigma(\dot{\rho})+\mu}=\sum\limits_{\lambda' \in \dot{P}_+^{uk}}c_{\lambda}^{\lambda'} \sum\limits_{\sigma\in \dot{W}}\epsilon(\sigma) e_{\sigma(\lambda'+\dot{\rho})}.  \label{eq:char6}
\end{equation} We change the variable both sides of the equation (\ref{eq:char6}) by putting $\eta = \sigma(\dot{\rho})+\mu -\dot{\rho}$ on the left, $\eta = \sigma(\lambda'+\dot{\rho})-\dot{\rho}$ on the right and then replacing $\sigma^{-1}$ on the right-hand side by $\sigma$ to obtain \begin{equation}
\sum\limits_{\eta}\sum\limits_{\tau\in W}\sum\limits_{\sigma\in \dot{W}} \epsilon(\tau\sigma) \mathcal{P}(\tau(\lambda+\rho)+\sigma(\dot{\rho})-(\eta+\rho+\dot{\rho}))e_{\eta +\dot{\rho}}
=
\sum\limits_\eta\sum\limits_{\sigma \in \dot{W}}\epsilon(\sigma)c_\lambda^{\sigma(\eta+\dot{\rho})-\dot{\rho}} e_{\eta+\dot{\rho}}. \label{eq:char7}
\end{equation}
As a weight of $\mathfrak{g}[u]$, if $\eta$ is dominant then $\eta +\dot{\rho}$ is strictly dominant. Then $\sigma(\eta+\dot{\rho})-\dot{\rho}$ is not dominant unless $\sigma=1$. Hence, $c_{\lambda}^{\sigma(\eta +\dot{\rho})-\dot{\rho}}=0$ unless $\sigma =1$. So, if we restrict both sides of the equation (\ref{eq:char7}) over $\eta \in \dot{P}_+$ then we get 
\begin{equation*}\sum\limits_{\eta \in \dot{P}_+}\sum\limits_{\tau\in W}\sum\limits_{\sigma\in \dot{W}} \epsilon(\tau\sigma) \mathcal{P}(\tau(\lambda+\rho)+\sigma(\dot{\rho})-(\eta+\rho+\dot{\rho}))e_{\eta +\dot{\rho}}
=
\sum\limits_{\eta \in \dot{P}_+} c_\lambda^\eta e_{\eta+\dot{\rho}}.
\label{eq:char4}
\end{equation*} Note that $\eta$ in both sides belong to $\dot{P}_+^{uk}$. It implies the formula of $c_{\lambda}^\eta$ in terms of Weyl groups and generalized Konstant's partition function for any $\lambda$ in $P_+^k$ and $\eta \in \dot{P}_+^{uk}$.
\end{proof}

\addcontentsline{toc}{section}{References}
\bibliography{references}{}
\bibliographystyle{alpha}
\noindent Okinawa Institute of Science and Technology, Onna-son, Okinawa, Japan 904-0495 \\
E-mail: \href{mailto:khanh.mathematic@gmail.com}{khanh.mathematic@gmail.com} \\
\end{document}